\documentclass[11pt]{article}
\usepackage{amssymb}
\usepackage[centering,textwidth=150mm,textheight=220mm]{geometry}
\usepackage{mathrsfs}
\usepackage{amsfonts}
\usepackage{amsmath}
\usepackage{cite,xcolor}
\numberwithin{equation}{section}
\allowdisplaybreaks[4]
\newtheorem{Theorem}{Theorem}[section]
\newtheorem{Proposition}[Theorem]{Proposition}

\newtheorem{Lemma}[Theorem]{Lemma}

\def\bbR{\mathbb{R}}
\def\rank{\mathrm{rank}}

\begin{document}


\title{Bilinear Fractional Integral Operators\thanks{%
This work was partially supported by the
National Natural Science Foundation of China (11525104, 11531013, 11761131002, 11801282 and 11926342).}}

\author{Ting Chen\quad and \quad Wenchang Sun\\
\small School of Mathematical Sciences and LPMC,  Nankai University,       Tianjin 300071,  China\\
Emails:\, t.chen@nankai.edu.cn,\quad sunwch@nankai.edu.cn}

\date{}
\maketitle

\begin{abstract}
We study the  bilinear fractional integral considered by
Kenig and Stein, where linear combinations of variables with matrix
coefficients are involved. Under more general settings, we give a complete characterization of the corresponding parameters for which the bilinear fractional integral is bounded from
$L^{p_1}(\bbR^{n_1}) \times L^{p_2}(\bbR^{n_2})$ to $L^q(\bbR^m)$.
\end{abstract}

\textbf{Keywords}.\,\,
Bilinear fractional integrals; Riesz potentials

\section{Introduction and The Main Result}

The multilinear singular integral operators
have been widely studied since
Coifman and Meyer's pioneer work \cite{Coifman1975}.
Christ and Journ\'{e} \cite{Chirst1987},
Grafakos and Torres \cite{Grafakos2002} developed the
theory of multilinear Calder\'on-Zygmund operators.
Lacey and Thiele \cite{Lacey1997,Lacey1999}
proved the boundedness of the bilinear Hilbert transform.

In this paper, we focus on the bilinear fractional integral studied
by Kenig and Stein \cite{KenigStein1999},
Grafakos and Kalton \cite{Grafakos1992,GrafakosKalton2001},
Grafakos and Lynch \cite{GrafakosLynch2015}.
Recall that for $0<\lambda<2n$, $f_1\in L^{p_1}(\bbR^n)$ and $f_2\in L^{p_2}(\bbR^n)$, the bilinear
fractional integral of $(f_1,f_2)$ is defined by
\[
   \int_{\bbR^{2n}}
     \frac{f_1(y_1)f_2(y_2) dy_1 dy_2}
       {(|x-y_1| + |x-y_2|)^{\lambda}}.
\]
It was shown in \cite[Lemma 7]{KenigStein1999} that
the bilinear
fractional integral is bounded from $L^{p_1}\times L^{p_2}$
to $L^{q}$ when the indices satisfy certain conditions.
We refer to \cite{Benyi2015,Cao2019,Chaffee2015,Chen2011,Hoang2018,
Komori-Furuya2020,Li2017,LiSun2016,
Moen2014} for some recent advances on the study of
bilinear
fractional integrals.

Komori-Furuya
\cite{Komori2017}
gave a complete characterization of the indices
for which the multilinear fractional integral is bounded.
Here we cite a bilinear version.

\begin{Proposition}[\cite{Komori2017}]\label{prop:P1}
Suppose that $1\le p_1,p_2\le \infty$, $0<\lambda<2n$ and $1/p_1 + 1/p_2 =
1/q + (2n-\lambda)/n$. Then the norm estimate
\[
  \left\|\int_{\bbR^{2n}}
    \frac{f_1(y_1)f_2(y_2) dy_1 dy_2}
     {(|x-y_1| + |x-y_2|)^{\lambda}}
     \right\|_{L_x^q}
     \lesssim \|f_1\|_{L^{p_1}} \|f_2\|_{L^{p_2}}
\]
is true if and only if
\begin{enumerate}
\item either $1<p_1<\infty$ or $1<p_2<\infty$,
\item the index $q$ satisfies
   \begin{align*}
  \begin{cases}
\max\{p_1,p_2\}\le q <\infty,  & \mbox{if }  \min\{p_1,p_2\}=1,  \\
\min\{p_1,p_2\}< q <\infty,  & \mbox{if }  \max\{p_1,p_2\}=\infty,\\
0<1/q <1/p_1 + 1/p_2,  &  \mbox{if }  1< p_1, p_2<\infty \mbox{ and }
  1/p_1 + 1/p_2<  1,
 \\
0\le 1/q <1/p_1 + 1/p_2,  &  \mbox{if }  1< p_1, p_2<\infty
      \mbox{ and }
  1/p_1 + 1/p_2\ge   1.
\end{cases}
   \end{align*}
\end{enumerate}
\end{Proposition}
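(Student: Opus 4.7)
The plan is to separate sufficiency and necessity, and for sufficiency to reduce the bilinear estimate to classical fractional integration. The key observation is the elementary inequality $(a+b)^{\lambda}\ge a^{\alpha}b^{\beta}$ valid for $\alpha,\beta\ge0$, $\alpha+\beta=\lambda$ and $a,b\ge 0$, which yields the pointwise bound
\[
 \int_{\bbR^{2n}} \frac{|f_1(y_1)f_2(y_2)|\,dy_1\,dy_2}{(|x-y_1|+|x-y_2|)^{\lambda}}
 \le I_{n-\alpha}(|f_1|)(x)\,I_{n-\beta}(|f_2|)(x),
\]
where $I_\gamma$ denotes the Riesz potential of order $\gamma$. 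The hypothesis $1/p_1+1/p_2=1/q+(2n-\lambda)/n$ is precisely the scaling equation that guarantees that, for admissible $(\alpha,\beta)$ with $0<\alpha,\beta<n$, the two Sobolev exponents produced by the Hardy--Littlewood--Sobolev inequality on each factor combine through Hölder to land in $L^q$.

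For the interior range $1<p_1,p_2<\infty$ with $0<1/q<1/p_1+1/p_2$, I would pick $\alpha$ in the open interval $(\max(0,\lambda-n),\min(n,\lambda))$ so that both $I_{n-\alpha}\colon L^{p_1}\to L^{q_1}$ and $I_{n-\beta}\colon L^{p_2}\to L^{q_2}$ are strong type with $1/q_1+1/q_2=1/q$, and then close by Hölder. The boundary $1/q=0$, permitted when $1/p_1+1/p_2\ge 1$, is reached by pushing one factor to the $L^p\to\mathrm{BMO}$ endpoint of the Riesz potential while keeping the other in the strong HLS range. When one of the $p_i$ equals $\infty$, the pointwise bound is replaced by a Fubini computation: integrating the kernel in $y_1$ over $\bbR^n$ gives $c\,|x-y_2|^{n-\lambda}$ (the scaling forces $\lambda>n$ in this regime, so the inner integral converges), reducing the problem to a single Riesz potential applied to $f_{3-i}$, whence HLS yields the strict range $\min(p_1,p_2)<q<\infty$.

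The most delicate case is $\min(p_1,p_2)=1$ with the other index in $(1,\infty)$: the weak endpoint of the Riesz potential means the product bound only gives $T\colon L^{p_1}\times L^{p_2}\to L^{q,\infty}$ for a single splitting $(\alpha,\beta)$. The plan is to obtain two such weak-type estimates by varying $\alpha$ and then apply bilinear Marcinkiewicz interpolation between them, which produces the strong $L^q$ bound in precisely the range $\max(p_1,p_2)\le q<\infty$. The restriction $q<\infty$ is forced because the scaling identity pushes the Sobolev exponent to infinity at the opposite end, and the lower bound $q\ge\max(p_1,p_2)$ is exactly the band of $(\alpha,\beta)$ for which both weak endpoints are simultaneously available.

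Necessity is obtained by scaling and test functions. The dilation identity $T(f_1(\delta\cdot),f_2(\delta\cdot))(x)=\delta^{\lambda-2n}T(f_1,f_2)(\delta x)$ forces the scaling relation by comparing norms as $\delta\to 0$ and $\delta\to\infty$, and the choice $f_1=f_2=\chi_{B(0,R)}$ together with the asymptotics $T(f_1,f_2)(x)\sim R^{2n}|x|^{-\lambda}$ as $|x|\to\infty$ rules out the forbidden corners $p_1=p_2=1$ and $p_1=p_2=\infty$ as well as delivering the sharp lower bounds on $q$ in the mixed endpoint regimes. The main obstacle throughout is the $\min(p_i)=1$ endpoint: making the Marcinkiewicz upgrade yield the sharp strong-type range rather than only a weak-type conclusion, and matching it exactly with a sharp counterexample, is the part that requires real care.
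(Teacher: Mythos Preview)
Your splitting $\lambda=\alpha+\beta$ followed by H\"older on the product of two Riesz potentials is exactly how the paper (in proving the generalisation Theorem~1.2 in the case $r_1=r_2=m$, subcase~(A1)(a)) handles the open range $1<p_1,p_2<\infty$, $0<1/q<1/p_1+1/p_2$; and your $p_i=\infty$ reduction by integrating out one variable also matches (subcase~(A2)(b)).

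The genuine gap is the endpoint $q=\infty$. There scaling forces $\lambda=n/p_1'+n/p_2'$, and any splitting $\lambda=\alpha+\beta$ for which the two Riesz potentials land in Lebesgue (or BMO) targets requires $\alpha\ge n/p_1'$ and $\beta\ge n/p_2'$; since $\alpha+\beta=n/p_1'+n/p_2'$, equality is forced in both, so \emph{both} factors hit the $L^{p_i}\to\mathrm{BMO}$ endpoint. A product of BMO functions (or of a BMO function with an $L^{q_2}$ function) is not in $L^\infty$, so the pointwise product bound cannot close. The paper proceeds differently: after translating $y_i\mapsto y_i+x$ the estimate becomes $x$-independent, namely
\[
\int_{\bbR^{2n}}\frac{|f_1(y_1)f_2(y_2)|\,dy_1\,dy_2}{(|y_1|+|y_2|)^{n/p_1'+n/p_2'}}\lesssim\|f_1\|_{L^{p_1}}\|f_2\|_{L^{p_2}},
\]
and this is proved by a separate Schur-type weak-type interpolation (Lemma~2.3), where the hypothesis $1/p_1+1/p_2\ge1$ is exactly what makes it work.

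Two smaller issues. In the $\min(p_1,p_2)=1$ case, varying $\alpha$ with $(p_1,p_2)$ fixed does \emph{not} change the target $q$, so there is nothing to interpolate between; what can be made to work is freezing $f_1\in L^1$ and running \emph{linear} Marcinkiewicz in $f_2$ across nearby $p_2$-values. The paper instead dualizes against $h\in L^{q'}$ and again invokes Lemma~2.3. On necessity, the single test $f_1=f_2=\chi_B$ with its $|x|^{-\lambda}$ tail does not detect the constraint $q\ge\max(p_1,p_2)$ when $\min(p_i)=1$, nor the strict inequality $1/q<1/p_1+1/p_2$; the paper obtains the former by letting one $f_i$ approximate a Dirac mass and reducing to the linear Lemma~2.3, and the latter by testing against log-weighted functions of the form $|y|^{-n/p}(\log 1/|y|)^{-(1+\varepsilon)/p}\chi_{\{|y|\le 1/2\}}$.
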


Kenig and Stein \cite[Remark 10]{KenigStein1999}
studied the multi-linear fractional integral of the following type,
\[
I(f_1,\ldots,f_k)(x)=  \int_{\bbR^{nk}}
    \frac{f_1(l_1)\ldots f_k(l_k) dy_1\ldots dy_k}
    {(|y_1|+\ldots + |y_k|)^{\lambda}},
\]
where
\[
l_i := l_i(y_1,\ldots,y_k,x)= \sum_{j=1}^k A_{i,j} y_j + A_{i,k+1}x
\]
are linear combinations of $y_1,\ldots,y_k,x\in\bbR^n$,
$A_{i,j}$ are $n\times n$ matrices such that
\begin{enumerate}
\item For each $1\le i\le k$, $A_{i,k+1}$ is invertible,
\item The $nk\times nk $ matrix $(A_{i,j})_{1\le i,j\le k}$
is invertible.
\end{enumerate}
They showed that when $1<p_i\le \infty$, $0<q<\infty$, $0<\lambda <kn$ and
\[
  \frac{1}{p_1} + \ldots + \frac{1}{p_k}
  = \frac{1}{q} + \frac{kn-\lambda}{n},
\]
$I(f_1,\ldots,f_k)$ is bounded from
$L^{p_1}\times \ldots \times L^{p_k}$
to $L^q$.

In this paper, we focus on the bilinear case.
Denote
\[
  A = \begin{pmatrix}
A_{11} & A_{12} \\
A_{21} & A_{22}
\end{pmatrix}
\quad\mbox{and}\quad
y = \begin{pmatrix}
y_1 \\
y_2
\end{pmatrix}.
\]
As in \cite{KenigStein1999}, we suppose that $A$ is invertible.
By a change of variable of the form
$y\rightarrow A^{-1}y$, we get
\begin{align*}
  I(f_1,f_2)(x)
&= \int_{\bbR^{2n}}
      \frac{f_1(A_{11}y_1\!+\!A_{12}y_2 \!+\!A_{13}x) f_2(A_{21}y_1\!+\!A_{22}y_2\!+\!A_{23}x)dy_1 dy_2}
      {(|y_1|+|y_2|)^{\lambda}}\\
  &\approx
    \int_{\bbR^{2n}}
      \frac{f_1(y_1+A_{13}x) f_2(y_2+A_{23}x)dy_1 dy_2}
      {(|y_1|+|y_2|)^{\lambda}}\\
   &=  \int_{\bbR^{2n}}
      \frac{f_1(y_1) f_2(y_2 )dy_1 dy_2}
      {(|A_{13}x - y_1|+|A_{23}x - y_2|)^{\lambda}} .
\end{align*}
It follows from  \cite[Remark 10]{KenigStein1999} that
$I(f_1,f_2)$ is bounded from
$L^{p_1}\times L^{p_2}$ to $L^q$ when both $A_{13}$ and $A_{23}$
are invertible and the indices satisfy certain conditions.

We show that for $I(f_1,f_2)$ to be bounded,
$A_{13}$ or $A_{23}$ might be singular.
Moreover,
we study
the bilinear fractional integral in more general settings.

Let $n_1, n_2$ and $m$ be positive integers,
$D_i$ be $n_i\times m$ matrix, $i=1,2$,
$0<p_1,p_2, q\le \infty$ and $0<\lambda<n_1+n_2$.
For   $f_i\in L^{p_i}(\bbR^{n_i})$, $i=1,2$
and $x\in\bbR^m$,
consider the bilinear fractional integral
\begin{equation}\label{eq:bilinear fractinoal}
  I_{\lambda,D}(f_1,f_2)(x)
   = \int_{\bbR^{n_1+n_2}} \frac{f_1(y_1) f_2(y_2) dy_1 dy_2}
   {(|D_1x-y_1|+|D_2x-y_2|)^{\lambda}}.
\end{equation}

We give necessary and sufficient conditions on
the matrices $D_1$ and $D_2$ and the indices
$p_1$, $p_2$, $q$ and $\lambda$ such that
$I_{\lambda,D}$ is bounded from
$L^{p_1}(\bbR^{n_1})\times L^{p_2}(\bbR^{n_2})$ to $L^q(\bbR^m)$.
Our result generalizes Proposition~\ref{prop:P1}.

\begin{Theorem} \label{thm:bilinear}
Suppose
that $n_1, n_2$ and $m$ are positive integers,
that $0<\lambda<n_1+n_2$,
that $0<p_1, p_2, q\le \infty$, and that $D_i$ are  $n_i\times m$ matrices, $i=1,2$. Denote $r_i=\rank(D_i)$. Then $I_{\lambda,D}$ is bounded from $L^{p_1}(\bbR^{n_1})
\times L^{p_2}(\bbR^{n_2})$ to $L^q(\bbR^m)$ if and only if
\begin{enumerate}
\item The rank of the $(n_1+n_2)\times m$ matrix $\binom{D_1}{D_2}$ is $m$;

\item The indices meet the homogeneity condition $ \lambda = n_1/p'_1 + n_2 /p'_2 + m/q$;

\item The index vector $\vec p=(p_1,p_2)$ satisfies $p_1, p_2\ge 1$,  $\#\{i:\, 1<p_i<\infty\}\ge 1$,
$p_1 < \infty$ when  $r_2 <m$, and $p_2<\infty$ when $r_1<m$;

\item The index  $q$ is finite when $\#\{i:\, 1<p_i<\infty\}=1$ or
             $1/p_1 + 1/p_2<1$.
     Moreover, $q$ satisfies
    one of the following conditions,

\begin{enumerate}

\item
when $r_1=r_2=m$,
\[
   \frac{1}{q} \le  \sum_{i:\, p_i>1} \frac{1}{p_i},
\]
where the equality is accessible only if (1) $\min\{p_1,p_2\}=1$
or (2) $1< p_1, p_2<\infty$, $n_1,n_2>m$ and $   1/p_1 + 1/p_2\ge   1$,

\item
when $r_1=0$ and $r_2=m$ (the case $r_1=m$ and $r_2=0$ is similar),
\[
  \begin{cases}
  q\ge p_1 , & \mbox{if } p_2=1 , \\
  q\ge p_2, & \mbox{if } 1<p_2<\infty, \mbox{ where the equality
   is accessible only   if }  n_2>m \mbox{ and }   \\
    & 1<p_1\le p'_2,\\
  \end{cases}
\]

\item
when $0<r_1<r_2=m$ or $0<r_2<r_1=m$,
\[
  \begin{cases}
 q\ge  \max\{p_1,p_2\} , & \mbox{if } \min\{p_1,p_2\}=1 , \\
 q> \min\{p_1,p_2\} , & \mbox{if } (p_1,r_2)=(\infty,m) \mbox{ or }
                               (p_2,r_1)=(\infty,m), \\
 q\ge  a_{p_1,p_2} , &\mbox{if } 1<p_1,p_2<\infty ,
  \end{cases}
\]
where $a_{p_1,p_2} = p_1$ when $r_1=m$,
 and $a_{p_1,p_2} =  p_2$ when $r_2=m$,

\item
when  $0<r_1, r_2<m$,
\[
  q \ge \max\{p_1,p_2\},
\]
where the equality  is accessible only if one of the following conditions is satisfied,
\begin{enumerate}
\item  $p_i=1$   for some  $i$ with $r_1+r_2>m$  or  $r_{3-i}<n_{3-i}$,

\item $1<p_1\ne  p_2 <\infty$,

\item $p_1=p_2$ and $r_1+r_2>m$,

\item $1<p_1=p_2\le 2$, $r_1+r_2=m$, $n_1>r_1$ and $n_2>r_2$.
\end{enumerate}

\end{enumerate}
\end{enumerate}
\end{Theorem}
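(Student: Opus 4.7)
The argument splits into necessity and sufficiency; for the latter I would reduce to Proposition~\ref{prop:P1}.

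\emph{Necessity.} Conditions (1) and (2) come from symmetries. If the rank of $\binom{D_1}{D_2}$ drops below $m$, there is a nonzero $v \in \bbR^m$ with $D_1 v = D_2 v = 0$; then $I_{\lambda,D}(f_1,f_2)$ is translation-invariant along $\bbR v$, which forces the $L^q$ norm to be infinite for any nontrivial nonnegative choice of $f_1, f_2$. Condition (2) is the standard scaling identity: dilating $f_i(\cdot) \mapsto f_i(t\,\cdot)$ and tracking both sides of the estimate pins down $\lambda = n_1/p'_1 + n_2/p'_2 + m/q$. Conditions (3) and (4) are extracted by testing the inequality against indicator functions of balls and rectangles at various aspect ratios; each forbidden range of indices produces two-sided norm asymptotics that cannot be matched by a finite constant.

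\emph{Sufficiency.} After orthogonal changes of coordinates in each $\bbR^{n_i}$ and an invertible change in $\bbR^m$, each $D_i$ may be assumed to be in block form with an $r_i \times r_i$ identity block in the upper-left corner and zero elsewhere. Writing $y_i = (u_i, w_i)$ with $u_i \in \bbR^{r_i}$ and $w_i \in \bbR^{n_i - r_i}$, the kernel satisfies $|D_i x - y_i| \approx |\tilde D_i x - u_i| + |w_i|$ for the top block $\tilde D_i$, separating the $w_i$-directions from the coupled $(u_i, x)$ dynamics. Case (a), $r_1 = r_2 = m$, then reduces after integrating out $w_1, w_2$ via Hardy--Littlewood--Sobolev to the setting of Proposition~\ref{prop:P1}. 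Case (b), with $D_1 = 0$ and $r_2 = m$, decouples into a Minkowski-type $L^{p_1}$ bound on $f_1$ combined with a linear Riesz-potential estimate on $f_2$. Case (c) interpolates these two mechanisms via Fubini on the partially-degenerate variable.

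\emph{Main obstacle.} Case (d), with $0 < r_1, r_2 < m$, is the hardest: neither $D_i$ alone parameterizes $\bbR^m$, and although condition (1) gives $\ker D_1 \cap \ker D_2 = \{0\}$, the sum $\ker D_1 + \ker D_2$ need not be all of $\bbR^m$. I would choose coordinates $x = (x', x'')$ adapted to a complement of $\ker D_1$, apply Proposition~\ref{prop:P1} fiberwise on one slice, and absorb the remaining $x'$-integration through an auxiliary fractional-integral estimate. The sharpness of the endpoint $q = \max\{p_1, p_2\}$ in subcases (i)--(iv) requires Lorentz-space endpoints of Hardy--Littlewood--Sobolev together with a duality argument; subcase (iv) with $1 < p_1 = p_2 \le 2$ is likely the most delicate, the restriction $p \le 2$ hinting that the key inequality is natural for $p' \ge 2$ and must be transferred to $p$ by duality after interpolating between two simpler endpoints.
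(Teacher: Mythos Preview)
Your high-level architecture matches the paper's, but there are genuine gaps at the endpoints, which is exactly where the theorem's content lies.

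\textbf{Necessity.} Testing against indicators of balls and rectangles will recover the open conditions in (iii)--(iv) but cannot detect the sharp endpoint exclusions (e.g.\ that $1/q=1/p_1+1/p_2$ fails when $\min\{n_1,n_2\}=m$ or $1/p_1+1/p_2<1$, or that $q=p_1=p_2>2$ fails in case (d) when $r_1+r_2=m$). The paper's counterexamples there are of the form $|y|^{-n/p}(\log 1/|y|)^{-(1+\varepsilon)/p}\chi_{\{|y|\le 1/2\}}$; the logarithmic factor is what makes the $L^p$ norm finite while the output just barely diverges. Indicators alone will not separate these borderline cases.

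\textbf{Sufficiency, case (a).} Your plan to integrate out $w_1,w_2$ by H\"older and then invoke Proposition~\ref{prop:P1} collapses at the endpoint $1/q=1/p_1+1/p_2$: after H\"older the reduced exponent on $(|x-u_1|+|x-u_2|)$ becomes $m/p_1'+m/p_2'+m/q=2m$, which is outside the range $\lambda'<2m$ of Proposition~\ref{prop:P1}. The paper instead replaces H\"older by the partial Hardy--Littlewood maximal function in the $u_i$ variables and then applies Lemma~\ref{Lm:L1} (the $|x|+|y|$ variant of Riesz, which does allow $p=q$) in the $w$ variables; this is the missing ingredient.

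\textbf{Sufficiency, case (d).} Adapting coordinates to a complement of $\ker D_1$ alone is not enough. The paper finds a \emph{simultaneous} normal form: an invertible $Q$ in $\bbR^m$ so that $D_1Q$ has its last $m-r_1$ columns zero and $D_2Q$ has its first $m-r_2$ columns zero, producing a three-block split $x=(x_1,x_2,x_3)\in\bbR^{m-r_2}\times\bbR^{r_1+r_2-m}\times\bbR^{m-r_1}$ with $x_2$ the overlap variable seen by both $D_i$. Proposition~\ref{prop:P1} is then applied only in the $x_2$-slice, after Young's inequality removes $(x_1,x_3)$. For the endpoint $q=p_1=p_2$ with $r_1+r_2=m$ (your subcase (iv)), no Lorentz spaces or interpolation are used: Young's inequality in $(x_1,x_3)$ reduces to an integral in $(y_{13},y_{23})$ against $(|y_{13}|+|y_{23}|)^{-(n_1-r_1)/p_1'-(n_2-r_2)/p_2'}$, and Lemma~\ref{Lm:L1} applies precisely when $1/p_1+1/p_2\ge 1$, i.e.\ $p_1=p_2\le 2$. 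That is the actual source of the restriction, not a duality transfer.
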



The paper is organized as follows.
In Section 2, we
give a generalized version of the (linear) fractional integral.
In Sections 3 and 4, we give the proof of the necessity and
sufficiency of Theorem~\ref{thm:bilinear}, respectively.

\section{Preliminary Results}

In this section, we collect some preliminary results which are used in the proof of Theorem~\ref{thm:bilinear}.

\subsection{Operators That Commute with Translations}

It is well known that if a linear operator   commutes with translations
and is bounded from $L^p(\bbR^n)$ to $L^q(\bbR^n)$ for some $0<p,q<\infty$,
then $p\le q$. For our purpose, we need a sightly general version, which
can be proved similarly to \cite[Theorem 2.5.6]{Grafakos2014}.

\begin{Proposition}\label{prop:commute}
Suppose that $T$ is a non-zero linear operator which is bounded
from $L^p(\bbR^n)$ to $L^q(\bbR^m)$ for some $1\le p,q<\infty$.
For any $a\in\bbR$ and $1\le i\le n$, define  $f_a(y)=f(y_1, \ldots, y_{i-1}, y_i-a, y_{i+1}, \ldots, y_n)$.
Suppose that for some $1\le j\le m$,
\[
  T (f_a)(x) = (Tf)(x_1, \ldots, x_{j-1}, x_j-a, x_{j+1}, \ldots, x_n),\qquad \forall a\in\bbR.
\]
Then we have $p\le q$.
\end{Proposition}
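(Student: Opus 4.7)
The plan is to argue by contradiction: assume $T$ is nonzero, bounded from $L^p(\bbR^n)$ to $L^q(\bbR^m)$, and commutes with translations along the $i$-th coordinate in the domain and the $j$-th coordinate in the target, yet $p > q$. Since $T$ is nonzero, the density of compactly supported functions in $L^p$ allows us to choose $f \in L^p(\bbR^n)$ with compact support for which $Tf \not\equiv 0$. The strategy is to test the boundedness inequality against the superposition $F_N = \sum_{k=1}^N f_{a_k}$, where $a_1,\ldots,a_N \in \bbR$ are shifts along the $i$-th coordinate chosen to be very widely separated, and to extract contradictory growth rates in $N$ on the two sides.

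By linearity and the commutation hypothesis, $TF_N = \sum_{k=1}^N (Tf)_{a_k}$, where the translates on the right act on the $j$-th coordinate of $\bbR^m$. Because $f$ is compactly supported, as soon as consecutive shifts $|a_k - a_\ell|$ exceed the diameter of $\mathrm{supp}\, f$, the translates $f_{a_k}$ are pairwise disjointly supported, so $\|F_N\|_{L^p}^p = N \|f\|_{L^p}^p$. For the image side I would approximate $Tf$ in $L^q$ by a compactly supported $g'$ with $\|Tf - g'\|_{L^q} < \eta/(2N)$ for an auxiliary parameter $\eta > 0$. Choosing the shifts $a_k$ far enough apart makes the translates $g'_{a_k}$ pairwise disjoint as well, so that $\|\sum_k g'_{a_k}\|_{L^q} = N^{1/q}\|g'\|_{L^q}$, and Minkowski's inequality gives
\[
  \Bigl\| \sum_{k=1}^N (Tf)_{a_k} \Bigr\|_{L^q} \ge N^{1/q}\|g'\|_{L^q} - N \cdot \frac{\eta}{2N} \ge N^{1/q}\|Tf\|_{L^q} - \eta,
\]
where the second inequality uses $N^{1/q} \le N$ (valid because $q \ge 1$). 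Since $\eta$ is arbitrary, this yields
\[
   \|TF_N\|_{L^q} \ge N^{1/q}\|Tf\|_{L^q}
\]
after selecting the shifts appropriately.

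Combining both estimates with the assumed bound $\|TF_N\|_{L^q} \le \|T\|\,\|F_N\|_{L^p}$ gives $N^{1/q}\|Tf\|_{L^q} \le \|T\|\,N^{1/p}\|f\|_{L^p}$ for every $N$. Since $\|Tf\|_{L^q} > 0$, letting $N \to \infty$ forces $1/q \le 1/p$, i.e., $p \le q$, contradicting the assumption. The main obstacle I anticipate is the lower bound on $\|TF_N\|_{L^q}$: unlike $f$, the image $Tf$ need not have compact support, so exact disjointness of its translates is unavailable, and one must pass through a density/Minkowski approximation. This is also the step that makes essential use of the hypothesis $q < \infty$.
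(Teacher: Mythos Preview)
Your proof is correct and is essentially the same argument as the paper's: both compare the growth of $\|F\|_{L^p}$ and $\|TF\|_{L^q}$ under sums of widely separated translates to force $1/q\le 1/p$. The only cosmetic difference is that the paper uses two translates and the limit $\lim_{a\to\infty}\|g(\cdot-a)+g\|_{L^q}=2^{1/q}\|g\|_{L^q}$ (then passes to the operator norm), whereas you take $N$ translates and send $N\to\infty$; the compact-support approximation you spell out is exactly what underlies that limit.
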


\begin{proof} For any $f\in L^p$ and $a\in\bbR$, we have
\begin{align*}
\lim_{a\rightarrow\infty}
  \|T f_a + Tf \|_{L^q}
&=\lim_{a\rightarrow\infty}
  \|  (Tf)(x_1, \ldots, x_{j-1}, x_j-a, x_{j+1}, \ldots, x_n)
     - (Tf)(x)\|_{L^q} \\
&= 2^{1/q}\|Tf\|_{L^q}.
\end{align*}
On the other hand,
\begin{align*}
\lim_{a\rightarrow\infty}
  \|T f_a + Tf \|_{L^q}
&\le \lim_{a\rightarrow\infty}
  \|T\| \cdot\| f_a + f \|_{L^p}
= 2^{1/p}\|T\|\cdot \|f\|_{L^p}.
\end{align*}
Hence
\[
  2^{1/q}\|Tf\|_{L^q}\le 2^{1/p}\|T\|\cdot \|f\|_{L^p}.
\]
Therefore, $1/q \le 1/p$. That is, $q\ge p$.
\end{proof}

\subsection{Generalized Fractional Integrals}

For $f\in L^p(\bbR^n)$ and $0<\lambda<n$,
the  fractional integral, also known as the Reisz potential,  of $f$ is defined by
\[
  I_{\lambda}f(x) = \int_{\bbR^n} \frac{f(y) dy}{|x-y|^{\lambda}}.
\]
It was shown that $I_{\lambda}$ is bounded from $L^p$ to $L^q$
whenever $1<p<q<\infty$. We refer to  \cite[Theorem 1.2.3]{Grafakos2014m}
or
\cite[Chapter 5.1]{Stein1970}
for a proof.
In this paper, we consider a generalized version.

\begin{Theorem}\label{thm:linear}
Let $n,m$ be positive integers, $0<\lambda<n$, $0<p,q\le \infty$
and $D$ be an $n\times m$ matrix.
Then the norm estimate
\begin{equation}\label{eq:Reisz}
   \left\| \int_{\bbR^n} \frac{f(y) dy }{|Dx-y|^{\lambda}}
    \right\|_{L_x^q}
    \lesssim \|f\|_{L^p}
\end{equation}
is true if and only if $\rank(D)=m$, $1<p<q<\infty$ and
$\lambda = n/p'+m/q$.
\end{Theorem}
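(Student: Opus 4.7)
My plan is to reduce $D$ to a canonical form by a linear change of variables, deduce the upper bound from H\"older's inequality together with the classical Hardy--Littlewood--Sobolev theorem, and extract each necessary condition either from scaling, from the translation-covariance principle of Proposition~\ref{prop:commute}, or from a carefully chosen test function.

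Writing $D=QR$ with $Q\in\bbR^{n\times m}$ having orthonormal columns and $R\in\bbR^{m\times m}$ invertible (a thin QR-decomposition, valid precisely when $\rank(D)=m$, which is the relevant case for sufficiency), I would complete $Q$ to an orthogonal matrix $[Q\mid Q^{\perp}]$ on $\bbR^{n}$ and substitute $y=[Q\mid Q^{\perp}]\tilde y$ (Jacobian $1$) and $\tilde x=Rx$. A direct computation then gives $|Dx-y|^{2}=|\tilde x-\tilde y_{1}|^{2}+|\tilde y_{2}|^{2}$ with $\tilde y=(\tilde y_{1},\tilde y_{2})\in\bbR^{m}\times\bbR^{n-m}$, so after absorbing constants it suffices to bound
\[
   Tf(x)=\int_{\bbR^{n-m}}\!\!\int_{\bbR^{m}}\frac{f(y_{1},y_{2})\,dy_{1}\,dy_{2}}{(|x-y_{1}|^{2}+|y_{2}|^{2})^{\lambda/2}},\qquad x\in\bbR^{m}.
\]
For sufficiency, I apply H\"older in $y_{2}$ for each fixed $y_{1}$; the dual integral $\int_{\bbR^{n-m}}(|x-y_{1}|^{2}+|y_{2}|^{2})^{-\lambda p'/2}\,dy_{2}$ is finite (since $\lambda p'=n+mp'/q>n-m$) and equals $C|x-y_{1}|^{(n-m)/p'-\lambda}$ by scaling. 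With $F(y_{1}):=\|f(y_{1},\cdot)\|_{L^{p}(\bbR^{n-m})}$, this yields $Tf(x)\leq C\int_{\bbR^{m}}F(y_{1})|x-y_{1}|^{-\mu}\,dy_{1}$ with $\mu=\lambda-(n-m)/p'=m/p'+m/q$, a standard Riesz potential on $\bbR^{m}$. Under $1<p<q<\infty$ one has $\mu\in(0,m)$, so Hardy--Littlewood--Sobolev applies, and Fubini gives $\|F\|_{L^{p}(\bbR^{m})}=\|f\|_{L^{p}(\bbR^{n})}$.

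For necessity I would handle the conditions one at a time. If $\rank(D)<m$, pick $0\neq v\in\ker D$; then $Tf(x+tv)=Tf(x)$ for all $t\in\bbR$, so $Tf$ is constant along lines parallel to $v$, forcing $\|Tf\|_{L^{q}}=\infty$ unless $Tf\equiv 0$, which fails for any nontrivial $f\geq 0$. The homogeneity $\lambda=n/p'+m/q$ comes from the dilation $f_{\delta}(y)=f(\delta y)$. The inequality $p\leq q$ is supplied by Proposition~\ref{prop:commute}: after normalization, $T$ intertwines the translation $y_{1}\mapsto y_{1}-a$ along any standard direction in $\bbR^{m}\subset\bbR^{n}$ with the corresponding translation $x\mapsto x-a$ on $\bbR^{m}$. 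Necessity of $p>1$ follows from testing against an approximate identity $f_{\epsilon}=\epsilon^{-n}\phi(\cdot/\epsilon)$: while $\|f_{\epsilon}\|_{L^{1}}$ stays bounded, $Tf_{\epsilon}(0)=C\epsilon^{-\lambda}\to\infty$ (with $Tf_{\epsilon}$ even continuous since $\lambda<n$), so $T$ cannot be bounded from $L^{1}$ to any $L^{q}$. Necessity of $q<\infty$ is analogous: the test $f(y)=|y|^{-n/p}(\log|y|)^{-1}\chi_{\{|y|\geq 2\}}$ lies in $L^{p}$ for $p>1$, but the homogeneity $\lambda=n/p'$ forces $Tf(x)\geq C\int_{2}^{\infty}r^{-1}(\log r)^{-1}\,dr=\infty$ on the positive-measure set $\{x:|Dx|\leq 1\}$.

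The hard part is the strict inequality $p<q$. When $n=m$, $p=q$ together with the homogeneity would force $\lambda=n$, contradicting $\lambda<n$. When $n>m$, I would rule out $p=q$ via the test function
\[
  f(y_{1},y_{2})=\chi_{\{|y_{1}|\leq 1\}}(y_{1})\cdot |y_{2}|^{-(n-m)/p}\bigl(\log(2/|y_{2}|)\bigr)^{-1}\chi_{\{|y_{2}|\leq 1\}}(y_{2}).
\]
The substitution $t=\log(2/|y_{2}|)$ gives $\|f\|_{L^{p}}^{p}\sim\int_{\log 2}^{\infty}t^{-p}\,dt<\infty$ for $p>1$. Integrating the kernel against $\chi_{\{|y_{1}|\leq 1\}}$ in $y_{1}$ picks up a factor $\sim|y_{2}|^{m-\lambda}$ (using $\lambda>m$, valid when $p>1$), after which the remaining $y_{2}$-integral reduces to $C\int_{0}^{1}r^{-1}(\log(2/r))^{-1}\,dr=\infty$ by exact cancellation of exponents at homogeneity. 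Hence $Tf\equiv\infty$ in a neighborhood of the origin, contradicting $Tf\in L^{q}(\bbR^{m})$ and giving the sharp endpoint obstruction that separates $p=q$ from $p<q$.
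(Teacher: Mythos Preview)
Your reduction via the QR decomposition and the H\"older-plus-Hardy--Littlewood--Sobolev argument for sufficiency are essentially the paper's proof in different coordinates, and your test function ruling out $p=q$ when $n>m$ is a clean direct alternative to the paper's duality argument (which bounds the adjoint from below in $L^{p'}$). There is, however, a genuine gap in your treatment of $p>1$.

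The approximate-identity test does not do what you claim. You note $Tf_\epsilon(0)=C\epsilon^{-\lambda}\to\infty$ while $\|f_\epsilon\|_{L^1}$ stays bounded and conclude that $T$ cannot map $L^1$ to $L^q$. But pointwise blowup at one point, even for a continuous function, says nothing about the $L^q$ norm when $q<\infty$. In fact $Tf_\epsilon(x)=\epsilon^{-\lambda}(T\phi)(x/\epsilon)$, so $\|Tf_\epsilon\|_{L^q}=\epsilon^{m/q-\lambda}\|T\phi\|_{L^q}$, and under the homogeneity relation $\lambda=m/q$ forced by $p=1$ this quantity is \emph{constant} in $\epsilon$: no contradiction arises. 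The paper instead tests with $f=\chi_{\{|y_1|+|y_2|\le 1\}}$; since $p\le 1$ gives $\lambda=n/p'+m/q\le m/q$, one has $Tf(x)\gtrsim(1+|x|)^{-\lambda}\notin L^q(\bbR^m)$. This single test also covers the range $0<p<1$, which your argument does not address at all.

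A smaller gap: Proposition~\ref{prop:commute} requires $1\le q<\infty$, and you never rule out $q<1$ before invoking it. The paper handles this (for $n>m$) by the reverse Minkowski inequality: for nonnegative $f$ and $q<1$,
\[
  \|Tf\|_{L^q}\ge\int_{\bbR^n}\Bigl\|\frac{f(y_1,y_2)}{(|x-y_1|+|y_2|)^\lambda}\Bigr\|_{L^q_x}dy\approx\int_{\bbR^n}\frac{f(y_1,y_2)}{|y_2|^{n/p'}}\,dy,
\]
and taking $f(y_1,y_2)=f_1(y_1)f_2(y_2)$ with $f_1\in L^p\setminus L^1$ makes the right side infinite.
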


\begin{proof}
Sufficiency.
Since $\rank(D)=m$, we have $n\ge m$.
There are two cases.

(i)\,\, $n=m$.

In this case, $D$ is invertible. The conclusion follows by the boundedness
of the Riesz potential.

(ii)\,\, $n>m$.

In this case, there exist an $n\times n$ invertible matrix $P$
and an $m\times m$ invertible matrix $Q$ such that
$
  D = P \binom{Q}{0}$.
Since $|Dx - y| \approx  |P^{-1}Dx - P^{-1}y|$,
By replacing $f$ with $f(P^{-1}\cdot)$ in (\ref{eq:Reisz}),
we get that (\ref{eq:Reisz}) is equivalent to
\[
   \left\| \int_{\bbR^n} \frac{f(y) dy }{|P^{-1}Dx-y|^{\lambda}}
    \right\|_{L_x^q}
    \lesssim \|f\|_{L^p}.
\]
Denote $y = (y_1, y_2)$, where $y_1\in \bbR^{m}$ and $y_2 \in \bbR^{n-m}$.
By a change of variable of the form $x\rightarrow Q^{-1}x$,
we see that the above inequality is equivalent to
\[
    \left\| \int_{\bbR^n} \frac{f(y_1,y_2) dy_1 dy_2 }
    {(|x-y_1| + |y_2|)^{\lambda}}
    \right\|_{L_x^q}
    \lesssim \|f\|_{L^p}.
\]
By H\"older's inequality, we have
\[
  \int_{\bbR^n} \frac{f(y_1,y_2) dy_1 dy_2 }
    {(|x-y_1| + |y_2|)^{\lambda}}
    \lesssim
 \int_{\bbR^{m}} \frac{\|f(y_1,\cdot)\|_{L_{y_2}^{p}} dy_1}
    { |x-y_1|^{\lambda-(n-m)/p'}}.
\]
Now we get the conclusion as desired by the
boundedness of the Riesz potential.

Necessity.
First, we show that $\rank(D)=m$.
Assume on the contrary that $\rank(D)<m$.
Then there is some invertible $m\times m$ matrix $Q$ such that
the last column of $DQ$ is zero.
By a change of variable of the form $x\rightarrow Qx$, we get
\begin{align*}
\left\| \int_{\bbR^n} \frac{f(y)dy}{|Dx-y|^{\lambda}}\right\|_{L_x^q}
&=
  \left\| \int_{\bbR^n} \frac{f(y)dy}
    {|DQx-y|^{\lambda}}\right\|_{L_x^q}
    \\
&=
  \left\|  \mbox{a function of $(x^{(1)}, \ldots, x^{(m-1)})$}\right\|_{L_x^q}
    \\
&=\infty,
\end{align*}
which is a contradiction.

By replacing $f$ with $f(\cdot/a)$, we get the homogeneity condition $\lambda = n/p'+m/q$. It remains to show that
$1<p<q<\infty$.

As in the sufficiency part, we have
\begin{equation}\label{eq:e6}
  \left\| \int_{\bbR^{n}}
    \frac{f(y_1, y_2) dy_1 dy_2}{(|x-y_1| + |y_2|)^{\lambda}}
    \right\|_{L_x^q}
    \lesssim \|f\|_{L^p},
\end{equation}
where $y_1\in \bbR^m$ and $y_2\in \bbR^{n-m}$.
If $n=m$, then we delete the variable $y_2$.
%

First, we show that $q<\infty$.
If $q=\infty$,
then we see from (\ref{eq:e6}) that
for any $h\in L^1$,
\[
  \int_{\bbR^{n+m}}
    \frac{|f(y_1, y_2) h(x)| dx dy_1 dy_2}{(|x-y_1| + |y_2|)^{\lambda}}
    \lesssim \|f\|_{L^p}\|h\|_{L^1}.
\]
Set $h = (1/\delta^m) \chi^{}_{\{|x|\le \delta\}}$ and let
$\delta\rightarrow 0$. We see from Fatou's lemma that
\begin{equation}\label{eq:e:s1}
  \int_{\bbR^{n}}
    \frac{|f(y)  |   dy }{ | y |^{\lambda}}
    \lesssim \|f\|_{L^p},
\end{equation}
which is impossible since $ 1/| y |^{\lambda}\not \in L^{p'}$ when
$p\ge 1$ and $(L^p)^* = \{0\}$ when $0<p<1$. Hence $q<\infty$.

Next we show that $1<p<\infty$.
If $p\le 1$, then $\lambda = n/p'+m/q \le m/q$. By setting $f =
\chi^{}_{\{|y_1|+|y_2|\le 1\}}$ in (\ref{eq:e6}), we get
\[
    \left\|
    \frac{1}{(1+|x|)^{\lambda}}
    \right\|_{L_x^q} <\infty,
\]
which is impossible since $\lambda q \le m$.

If $p=\infty$, by setting $f\equiv 1$, we get
\[
  \int_{\bbR^{n}}
    \frac{f(y_1, y_2) dy_1 dy_2}{(|x-y_1| + |y_2|)^{\lambda}} = \infty.
\]
Hence $1<p<\infty$.

Finally,  we show that $q> p$.
If $n=m$, we see from $\lambda = n/p' + m/q<n$ that $q>p$. 
It remains to consider the case $n>m$.

Denote
\[
  Tf(x) = \int_{\bbR^{n}}
    \frac{f(y_1, y_2) dy_1 dy_2}{(|x-y_1| + |y_2|)^{\lambda}}.
\]
If $q<1$, we see from Minkowski's inequality that when
$f$ is non-negative, 
\begin{align*}
\|Tf\|_{L^q} 
\ge \int_{\bbR^{n}}
    \left\|\frac{f(y_1, y_2) }{(|x-y_1| + |y_2|)^{\lambda}}\right\|_{L_x^q}dy_1 dy_2
 \approx 
 \int_{\bbR^{n}}
        \frac{f(y_1, y_2) dy_1 dy_2 }{|y_2|^{n/p'}}.
\end{align*}
Take some $f(y_1,y_2)=f_1(y_1)f_2(y_2)$ with
$f_1, f_2\ge 0$, $f_2\in L^p$ and $f_1\in L^p\setminus L^1$,
we get a contradiction.
Hence $q\ge 1$.

For $z=(0,\ldots,0,a)\in\bbR^m$, we have
\begin{align*}
   (T f(\cdot-z, \cdot))(x)
  &=  \int_{\bbR^{n}}
    \frac{f(y_1-z, y_2) dy_1 dy_2}{(|x-y_1| + |y_2|)^{\lambda}}\\
  &=(Tf)(x-z).
\end{align*}
By Proposition~\ref{prop:commute}, we have $p\le q$.

If $p=q$, then $\lambda = (n-m)/p' + m$. Since $\lambda<n$, we have $m<n$.
It follows from     (\ref{eq:e6}) that for any $h\in L^{q'}$,
\begin{equation}\label{eq:e7}
 \left\|  \int_{\bbR^m} \frac{h(x)dx}{(|x-y_1| + |y_2|)^{\lambda}}
  \right\|_{L_y^{p'}}
   \lesssim \|h\|_{L^{q'}}.
\end{equation}
Set $h = \chi^{}_{\{|x|\le 1\}}$. For $\delta>0$ small enough
and $|y_1|, |y_2|\le \delta$, we have
\begin{align*}
\int_{\bbR^m} \frac{h(x)dx}{(|x-y_1| + |y_2|)^{\lambda}}
&\ge \int_{|x-y_1|\le |y_2|} \frac{h(x)dx}{(|x-y_1| + |y_2|)^{\lambda}} \\
&\gtrsim \frac{1}{|y_2|^{\lambda - m}}.
\end{align*}
Since $\lambda - m = (n-m)/p'$, we have
\[
  \left\|  \int_{\bbR^m} \frac{h(x)dx}{(|x-y_1| + |y_2|)^{\lambda}}
  \right\|_{L_y^{p'}} = \infty,
\]
which is a contradiction.
This completes the proof.
\end{proof}

Next we show that when $|x-y|$ is replaced by $|x|+|y|$ in the definition
of $I_{\lambda}$,
then even for $p=q$, the operator is bounded from $L^p$ to $L^q$.

\begin{Lemma}\label{Lm:L1}
Suppose that $0<p, q\le \infty$ and $\lambda>0$.
Then the inequality
\begin{equation}\label{eq:L1}
  \left\|\int_{\bbR^n}
   \frac{f(y)  dy   }{(|x|+|y|)^{\lambda}}
   \right\|_{L^q(\bbR^m)}
   \lesssim \|f\|_{L^{p}}, \qquad f\in L^p(\bbR^n),
\end{equation}
is true if and only if $\lambda = n/p'+m/q$ and
$1< p\le q<\infty$.

As a result, for $1\le p_1, p_2 \le \infty$ with $(p_1, p_2)\ne (1,1)$,
\begin{equation}\label{eq:L1a}
  \left |\int_{\bbR^{n_1+n_2}}
   \frac{f_1(y_1)  f_2(y_2) dy_1 dy_2   }
    {(|y_1|+|y_2|)^{n_1/p'_1+n_2/p'_2}}
   \right |
   \lesssim \|f_1\|_{L^{p_1}(\bbR^{n_1})}
   \|f_2\|_{L^{p_2}(\bbR^{n_2})}
\end{equation}
  if and only if $1<p_1, p_2<\infty$ and $1/p_1 + 1/p_2 \ge 1$.
\end{Lemma}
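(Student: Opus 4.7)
The plan is to establish the one-function estimate (\ref{eq:L1}) first and then derive the bilinear estimate (\ref{eq:L1a}) from it by duality. In both directions the homogeneity condition $\lambda=n/p'+m/q$ is forced by the usual scaling $f\mapsto f(\alpha\cdot)$ combined with a dilation in $x$, so the real work lies in the remaining conditions.

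For the sufficiency of (\ref{eq:L1}), assume $\lambda=n/p'+m/q$ and $1<p\le q<\infty$ and use a dyadic annular decomposition of both spaces into $A_k=\{|y|\sim 2^k\}$ and $B_j=\{|x|\sim 2^j\}$, $j,k\in\mathbb{Z}$. On $B_j\times A_k$ the kernel satisfies $(|x|+|y|)^{-\lambda}\lesssim 2^{-\lambda\max(j,k)}$, and H\"older's inequality gives $\int_{A_k}|f|\,dy\lesssim a_k 2^{kn/p'}$ with $a_k=\|f\chi_{A_k}\|_{L^p}$. Taking the $L^q(B_j)$-norm contributes an additional factor $2^{jm/q}$, and the identities $\lambda-n/p'=m/q$, $\lambda-m/q=n/p'$ rearrange the total bound into a discrete convolution
\[
\|Tf\|_{L^q(B_j)}\lesssim \sum_{k\in\mathbb{Z}}K(j-k)a_k,\qquad K(u)=\begin{cases}2^{-un/p'},& u\ge 0,\\ 2^{um/q},& u<0.\end{cases}
\]
Since $n/p',m/q>0$, $K$ decays geometrically on both sides and lies in every $\ell^r(\mathbb{Z})$. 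Young's inequality with $1/r=1-1/p+1/q$ yields $\|Tf\|_{L^q}\lesssim\|K\|_{\ell^r}\|a\|_{\ell^p}\lesssim\|f\|_{L^p}$, and the admissible range $r\in[1,\infty]$ is exactly $p\le q$.

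For the necessity of (\ref{eq:L1}), the boundary cases are excluded by explicit test functions. If $q=\infty$, then $\lambda=n/p'$ and evaluating at $x=0$ forces $|y|^{-n/p'}\in L^{p'}$, which fails. If $p=1$, then $\lambda=m/q$ and approximating $\delta_0$ by $\delta^{-n}\chi_{\{|y|\le\delta\}}$ produces $Tf\to|x|^{-m/q}$, not in $L^q(\mathbb{R}^m)$ near the origin. If $p=\infty$, taking $f\equiv 1$ gives $Tf(x)\approx|x|^{n-\lambda}=|x|^{-m/q}$, again not in $L^q$. To force $p\le q$ I restrict to radial $f$, so that $Tf$ is also radial, and substitute $|y|=e^u$, $|x|=e^v$ with the renormalizations $F(u)=f(e^u)e^{un/p}$ and $G(v)=Tf(e^v)e^{vm/q}$. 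Since $(e^v+e^u)^\lambda=e^{\lambda\max(u,v)}(1+e^{-|u-v|})^\lambda$ with bounded final factor, a direct calculation produces
\[
G(v)\asymp \int_{\mathbb{R}} K_0(u-v)F(u)\,du,\qquad K_0(w)=\begin{cases}e^{-wm/q},& w\ge 0,\\ e^{wn/p'},& w<0,\end{cases}
\]
with $\|F\|_{L^p(\mathbb{R})}\approx\|f\|_{L^p(\mathbb{R}^n)}$ and $\|G\|_{L^q(\mathbb{R})}\approx\|Tf\|_{L^q(\mathbb{R}^m)}$. Thus (\ref{eq:L1}) implies the boundedness of a non-zero translation-invariant convolution from $L^p(\mathbb{R})$ to $L^q(\mathbb{R})$, and Proposition~\ref{prop:commute} forces $p\le q$.

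The bilinear statement (\ref{eq:L1a}) now follows by duality in $f_2$: the bilinear form is bounded on $L^{p_1}\times L^{p_2}$ if and only if the linear operator $f_1\mapsto\int f_1(y_1)/(|y_1|+|y_2|)^\lambda\,dy_1$ is bounded from $L^{p_1}(\mathbb{R}^{n_1})$ into $L^{p_2'}(\mathbb{R}^{n_2})$, which by (\ref{eq:L1}) (with $p=p_1$, $q=p_2'$, $n=n_1$, $m=n_2$) happens exactly when $1<p_1\le p_2'<\infty$. Rewriting $p_1\le p_2'$ as $1/p_1+1/p_2\ge 1$ and checking the homogeneity exponent recovers the stated conditions. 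The main technical point in the plan is the radial reduction, i.e.\ verifying that the transformed kernel is, up to a uniformly bounded factor, a genuine function of $u-v$, so that Proposition~\ref{prop:commute} can be applied.
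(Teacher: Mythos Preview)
Your argument is correct in all essentials and takes a route genuinely different from the paper's in two places. For sufficiency, the paper obtains a one-line weak-type bound via H\"older, namely $|Sf(x)|\le\|f\|_{L^r}/|x|^{m/s}$, and then interpolates; your dyadic-annuli-plus-Young argument is more explicit and avoids Marcinkiewicz interpolation. For the necessity of $p\le q$, the paper builds a single logarithmic test function $f(y)=\chi_{\{|y|\le 1/2\}}\,|y|^{-n/p}(\log 1/|y|)^{-(1+\varepsilon)/p}$ and computes directly, whereas your radial--logarithmic substitution reducing $T$ to a genuine convolution on $\mathbb{R}$ is structural and reuses Proposition~\ref{prop:commute} rather than a bespoke counterexample. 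Two small points to patch: you rule out $p=1$ but not $0<p<1$, which the statement allows; the paper's remedy is to take $f=\chi_{\{|y|\le 1\}}$ and observe that $p\le 1$ forces $\lambda\le m/q$, whence $T_\lambda f(x)\ge (1+|x|)^{-\lambda}\notin L^q$. Second, Proposition~\ref{prop:commute} is stated only for $1\le p,q<\infty$, while at the point you invoke it you have not yet secured $q\ge 1$; the proposition's proof, however, goes through verbatim for all $0<p,q<\infty$, so this is cosmetic.
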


\begin{proof}
Denote
\[
  T_{\lambda}f(x) = \int_{\bbR^n}
   \frac{f(y)  dy   }{(|x|+|y|)^{\lambda}}.
\]
\textit{Necessity}.
As in the proof of Theorem~\ref{thm:linear},
by replacing $f$ with $f(\cdot/a)$, we get the homogeneity condition $\lambda = n/p'+m/q$. It remains to show that
$1< p\le q<\infty$.

If $q=\infty$, then for any $f\in L^p$ and $h\in L^1(\bbR^m)$,
\[
    \int_{\bbR^{m+n}}
   \frac{|f(y)h(x)|  dx dy  }{(|x|+|y|)^{\lambda}}
   \lesssim  \|f\|_{L^p}\|h\|_{L^1}.
\]
By setting $h = (1/\delta^m) \chi^{}_{\{|x|\le \delta\}}$ and letting
$\delta\rightarrow 0$, we  get (\ref{eq:e:s1}),
which is impossible. Hence $q<\infty$.

If $p\le 1$, then $\lambda = n/p' +m/q \le m/q$.
Set $f = \chi^{}_{\{|y|\le 1\}}$.
We have
\[
  |T_{\lambda}f(x)| \ge \frac{1}{(1+|x|)^{\lambda}}.
\]
Hence $T_{\lambda}f \not\in L^q$, which is a contradiction.

It remains to show that $p\le q$.
Set
\[
f(y)  = \frac{\chi^{}_{\{|y|\le 1/2\}}(y)}
     {|y|^{n/p} (\log 1/|y|)^{(1+\varepsilon)/p}},
\]
where $\varepsilon>0$ is a constant. We have $f\in L^p$.
For $\delta>0$ small enough and $|x|\le \delta$, we have
\begin{align*}
 \int_{\bbR^n}
   \frac{f(y)  dy }{(|x|+|y|)^{\lambda}}
   &\ge
\int_{|x|^2 \le |y|\le |x| }
   \frac{ dy }{(|x|+|y|)^{n/p'+m/q}
   |y|^{n/p} (\log 1/|y|)^{(1+\varepsilon)/p}
   } \\
&\gtrsim
   \frac{1}{|x|^{m/q} (\log 1/|x|)^{(1+\varepsilon)/p} }.
\end{align*}
If $q<p$, we can choose $\varepsilon>0$ small enough
such that $q (1+\varepsilon)/p <1$. Consequently,
\[
\left\|\int_{\bbR^n}
   \frac{f(y)  dy }{(|x|+|y|)^{\lambda}}
   \right\|_{L_x^q}
   =\infty,
\]
which contradicts the assumption.
Hence $p\le q$.

\textit{Sufficiency}.
Define   the operator $S$   by
\[
  Sf(x) = \int_{\bbR^n} \frac{f(y)dy}{(|x|+|y|)^{\lambda}},\quad x\in\bbR^m.
\]
Let $1<r,s <\infty$ be such that $n/r' + m/s = \lambda$. For any $f\in L^r$,
it follows from H\"older's inequality that
\[
    |Sf(x)|\le \frac{\|f\|_{L^r}}{|x|^{m/s}}.
\]
Hence
$\|Sf\|_{L^{s,\infty}} \lesssim \|f\|_{L^r}$. By the interpolation theorem,
 $S$ is bounded from $L^p$ to $L^q$ whenever $n/p' + m/q = \lambda$
 and $q\ge p$. Hence
(\ref{eq:L1}) is true. This completes the proof.
\end{proof}

\section{Proof of Theorem~\ref{thm:bilinear}: The Necessity}

In this section, we give a proof of the necessity part
in Theorem~\ref{thm:bilinear}.
We prove the conclusion in several steps.

(S1)\,\, We show that
$\rank\binom{D_1}{D_2} = m$.

Since $D_i$  are $n_i\times m$ matrices, $i=1,2$,
we have $\rank\binom{D_1}{D_2} \le m$.
Assume that
$\rank\binom{D_1}{D_2} < m$. Then there is some $m\times m$ invertible
matrix $P$ such that
the last column of $\binom{D_1}{D_2}P$ is zero.
By a change of variable of the form
$x\rightarrow Px$, we get
\begin{align*}
 \left\| I_{\lambda,D}(f_1,f_2)\right\|_{L^q}
   &= \left\| \int_{\bbR^{n_1+n_2}} \frac{f_1(y_1) f_2(y_2) dy_1 dy_2}
   {(|D_1x-y_1|+|D_2x-y_2|)^{\lambda}} \right\|_{L^q}\\
&\approx
  \left\| \int_{\bbR^{n_1+n_2}} \frac{f_1(y_1) f_2(y_2) dy_1 dy_2}
   {(|D_1Px-y_1|+|D_2Px-y_2|)^{\lambda}} \right\|_{L^q} \\
& = \left\| \mbox{ a function of $(x^{(1)}, \ldots, x^{(m-1)})$}   \right\|_{L^q} \\
&=\infty,
\end{align*}
which is a contradiction.

(S2)\,\, We show that the indices $p_1, p_2, q$ and $\lambda$ satisfy the homogeneity condition.

For $a>0$, set $f_{i,a} = f_i(\cdot/a)$, $i=1,2$. We have
\begin{align*}
I_{\lambda,D}(f_{1,a},f_{2,a})(x)
   &= \int_{\bbR^{n_1+n_2}} \frac{f_1(y_1/a) f_2(y_2/a) dy_1 dy_2}
   {(|D_1x-y_1|+|D_2x-y_2|)^{\lambda}}
     \\
&= \int_{\bbR^{n_1+n_2}} \frac{a^{n_1+n_2}f_1(y_1 ) f_2(y_2 ) dy_1 dy_2}
   {(|D_1x-ay_1|+|D_2x-ay_2|)^{\lambda}}
     \\
&= \int_{\bbR^{n_1+n_2}} \frac{a^{n_1+n_2-\lambda}f_1(y_1 ) f_2(y_2 ) dy_1 dy_2}
   {(|D_1x/a-y_1|+|D_2x/a-y_2|)^{\lambda}}
     \\
&= a^{n_1+n_2-\lambda}I_{\lambda,D}(f_1,f_2)(x/a).
\end{align*}
Since $I_{\lambda,D}$ is bounded, we have
\[
  \|a^{n_1+n_2-\lambda}I_{\lambda,D}(f_1,f_2)(x/a)\|_{L^q}
  \lesssim
  C
  \|f_{1,a}\|_{L^{p_1}} \|f_{2,a}\|_{L^{p_2}},
\]
where $C=
\| I_{\lambda,D}\|_{L^{p_1}\times L^{p_2}\rightarrow L^q}$.
Hence
\[
  a^{n_1+n_2-\lambda+m/q} \|I_{\lambda,D}(f_1,f_2)\|_{L^q}
  \lesssim
  a^{n_1/p_1+n_2/p_2}  C
  \|f_1\|_{L^{p_1}} \|f_2\|_{L^{p_2}}.
\]
Since $a>0$ is arbitrary, we have
$n_1+n_2-\lambda+m/q = n_1/p_1+n_2/p_2$.
Thus $ \lambda = n_1/p'_1 + n_2 /p'_2 + m/q$.

(S3)\,\, We show that $p_i\ge 1$, $i=1,2$.

Without loss of generality, assume that $p_1<1$.  Then
$\lambda <n_2 - n_2/p_2 + m/q$.
Hence
there is some $\alpha>n_2/p_2$ such that
$\lambda+\alpha - n_2 <m/q$.
Set $f_1(y_1) = \chi^{}_{\{|y_1|\le 1\}}(y_1)$
and $f_2(y_2) = \frac{1}{(1+|y_2|)^{\alpha}}$.
Then we have $f_i\in L^{p_i}$. However,
\begin{align*}
  I_{\lambda,D}(f_1,f_2)(x)
   &= \int_{\bbR^{n_1+n_2}} \frac{f_1(y_1) f_2(y_2) dy_1 dy_2}
   {(|D_1x-y_1|+|D_2x-y_2|)^{\lambda}} \\
&\gtrsim
    \int_{\bbR^{n_2}} \frac{  f_2(y_2)   dy_2}
      {(1+|x|+|y_2|)^{\lambda}}\\
&\ge
    \int_{\bbR^{n_2}} \frac{   dy_2}
      {(1+|x|+|y_2|)^{\lambda+\alpha }}\\
&\approx
   \frac{1}{(1+|x|)^{\lambda+\alpha - n_2}}
\end{align*}
which is not in $L^q$ when $q<\infty$.

When $q=\infty$ and $p_2<\infty$, the above choices
of $f_1$ and $f_2$ lead to
\begin{align*}
  I_{\lambda,D}(f_1,f_2)(x)
    &\gtrsim
    \int_{\bbR^{n_2}} \frac{   dy_2}
      {(1+|x|+|y_2|)^{\lambda+\alpha }}
 =\infty.
\end{align*}
When $p_2=q=\infty$, we have $\lambda<n_2$.
Set $f_1 = \chi^{}_{\{|y_1|\le 1\}}$
and $f_2 \equiv 1$.
We get
\begin{align*}
  I_{\lambda,D}(f_1,f_2)(x)
   &= \int_{\bbR^{n_1+n_2}} \frac{f_1(y_1) f_2(y_2) dy_1 dy_2}
   {(|D_1x-y_1|+|D_2x-y_2|)^{\lambda}} \\
&\gtrsim
    \int_{\bbR^{n_2}} \frac{    dy_2}
      {(1+|x|+|y_2|)^{\lambda}}\\
&=\infty.
\end{align*}
Again, we get a contradiction.
Hence $p_1, p_2\ge 1$.

(S4)\,\, We show that
either $1<p_1<\infty$ or $1<p_2<\infty$.

Assume on the contrary  that $p_i=1$ or the infinity.
There are three cases.

(i)\,\, $p_1=p_2=1$.

In this case, $\lambda = m/q$. Hence $q<\infty$.
Set $f_i = \chi^{}_{\{|y_i|\le 1\}}$, $i=1,2$. We have
\begin{align*}
  I_{\lambda,D}(f_1,f_2)(x)
   &= \int_{\bbR^{n_1+n_2}} \frac{f_1(y_1) f_2(y_2) dy_1 dy_2}
   {(|D_1x-y_1|+|D_2x-y_2|)^{\lambda}} \\
&\gtrsim
    \frac{1}{(1+|x|)^{\lambda}}.
\end{align*}
Therefore, $I_{\lambda,D}(f_1,f_2)\not\in L^q$, which is a contradiction.

(ii)\,\, $p_1=p_2=\infty$.

In this case, $\lambda = n_1/p'_1 + n_2/p'_2 + m/q \ge n_1+n_2$, which contradicts the hypothesis.

(iii)\,\, One of $p_1$ and $p_2$ equals $1$,
and the other  equals   $\infty$.

Without loss of generality, assume that  $p_1=1$ and $p_2=\infty$.
Then we have $\lambda = n_2+m/q$. Set $f_2 \equiv 1$.

If $q=\infty$, then
\begin{align*}
  I_{\lambda,D}(f_1,f_2)(x)
   &= \int_{\bbR^{n_1+n_2}} \frac{f_1(y_1)  dy_1 dy_2}
   {(|D_1x-y_1|+|D_2x-y_2|)^{n_2}}
 =\infty.
\end{align*}

If $q<\infty$, we have
\begin{align*}
  I_{\lambda,D}(f_1,f_2)(x)
   &= \int_{\bbR^{n_1+n_2}} \frac{f_1(y_1)  dy_1 dy_2}
   {(|D_1x-y_1|+|D_2x-y_2|)^{\lambda}}\\
 &\approx\int_{\bbR^{n_1}}\frac{f_1(y_1)  dy_1  }
   {|D_1x-y_1|^{m/q}}.
\end{align*}
Hence
\[
  \left\|\int_{\bbR^{n_1}}\frac{f_1(y_1)  dy_1  }
   {|D_1x-y_1|^{m/q}}
   \right\|_{L^q} \lesssim \|f_1\|_1,
\]
which is impossible, thanks to Theorem~\ref{thm:linear}.

In both cases, we get contradictions. Hence there is some $i$
such that $1<p_i<\infty$.

(S5)\,\, We show that $p_1<\infty$ when $r_2<m$,  and
$p_2<\infty$ when $r_1<m$.

Assume that  $r_2<m$ and $p_1=\infty$.
Set $f_1 \equiv 1$. We see from $\| I_{\lambda,D}(f_1,f_2)\|_{L^q}
\lesssim \|f_1\|_{L^{p_1}} \|f_2\|_{L^{p_2}}$ that
\[
  \left\| \int_{\bbR^{n_2}}
     \frac{f_2(y_2) dy_2}{|Dx-y_2|^{n_2/p'_2 + m/q}}
     \right\|_{L_x^q}
  \lesssim \|f_2\|_{L^{p_2}},
\]
which contradicts Theorem~\ref{thm:linear} since $r_2=\rank(D_2)<m$.

Similarly we get that $p_2<\infty$ when $r_1<m$.

(S6)\,\, We show that $q<\infty$ when one of the three conditions is satisfied, $\min\{p_1,p_2\}=1$,
$\max\{p_1,p_2\}=\infty$
or $1/p_1 + 1/p_2 <1$.

Assume on the contrary that $q=\infty$. Then $\lambda = n_1/p'_1 + n_2/p'_2$. For any $h\in L^1$,
\[
  \int_{\bbR^{n_1+n_2+m}}
    \frac{|f_1(y_1) f_2(y_2) h(x)|dx dy_1 dy_2 }
    {(|D_1x-y_1|+|D_2x-y_2|)^{\lambda}}
   \lesssim \|f_1\|_{L^{p_1}} \|f_2\|_{L^{p_2}}\|h\|_{L^1}.
\]
By setting $h = (1/\delta^m) \chi^{}_{\{|x|\le \delta\}}$
and letting $\delta\rightarrow 0$, we get
\[
  \int_{\bbR^{n_1+n_2 }}
    \frac{|f_1(y_1) f_2(y_2) | dy_1 dy_2 }
    {(|y_1|+|y_2|)^{n_1/p'_1 + n_2/p'_2}}
   \lesssim \|f_1\|_{L^{p_1}} \|f_2\|_{L^{p_2}} .
\]
By Lemma~\ref{Lm:L1}, $1<p_1, p_2<\infty$ and $1/p_1 + 1/p_2 \ge 1$,
 which contradicts the assumption. Hence $q<\infty$.

In the following subsections, we prove the rest part
 in several cases.

\subsection{The case $r_1 = r_2 = m$}\label{subsec:r1r2=m}

Since $\rank(D_i) = m$
and $ D_i $ are $n_i\times m$ matrices,   we have
$n_1, n_2\ge m$. There are $n_i\times n_i$ invertible matrices
$P_i$, $i=1,2$, such that
\[
  P_1 D_1 = \begin{pmatrix}
  I_{m}\\
  0_{(n_1-m)\times m}
  \end{pmatrix}
  \quad
  \mathrm{and}
  \quad
  P_2 D_2 = \begin{pmatrix}
  I_{m}\\
  0_{(n_2-m)\times m}
  \end{pmatrix},
\]
where $I_m$ stands for the $m\times m$ identity matrix.
Note  that
\begin{align*}
 |D_1x-y_1| + |D_2x-y_2|
&\approx |P_1D_1x-Py_1| + |P_2D_2x-P_2y_2|  .
\end{align*}
We have
\begin{align*}
& \int_{\bbR^{n_1+n_2}} \frac{f_1(y_1) f_2(y_2) dy_1 dy_2}
   {(|D_1x-y_1|+|D_2x-y_2|)^{\lambda}} \\
&\approx  \int_{\bbR^{n_1+n_2}} \frac{f_1(y_1) f_2(y_2) dy_1 dy_2}
   {(|P_1D_1x-P_1y_1|+|P_2D_2x-P_2y_2|)^{\lambda}}  .
\end{align*}
By replacing $f_1(P_1\cdot)$ and $f_2(P_2\cdot)$ for
$f_1$ and $f_2$, respectively, we get that
the boundedness of $I_{\lambda,D}$ is equivalent to
\[
  \left\| \int_{\bbR^{n_1+n_2}} \frac{f_1(y_1) f_2(y_2) dy_1 dy_2}
   {(|P_1D_1x-y_1|+|P_2D_2x-y_2|)^{\lambda}} \right\|_{L^q}
 \lesssim \|f_1\|_{L^{p_1}} \|f_2\|_{L^{p_2}}.
\]
Denote $y_i = (y_{i1}, y_{i2})$, where $y_{i1}\in \bbR^m$
and $y_{i2} = \bbR^{n_i-m}$, $i=1,2$. The above inequality becomes
\begin{equation}\label{eq:bi:bounded}
  \left\| \int_{\bbR^{n_1+n_2}} \frac{f_1(y_{11},y_{12}) f_2(y_{21}, y_{22}) dy_1 dy_2}
   {(| x-y_{11}|+| x-y_{21}| + |y_{12}| + |y_{22}|)^{\lambda}} \right\|_{L^q}
 \lesssim \|f_1\|_{L^{p_1}} \|f_2\|_{L^{p_2}}.
\end{equation}

In the followings we prove the conclusion in several steps.

(i)\, We show that $q\ge \max\{p_1,p_2\}$ when
$ \min\{p_1,p_2\}=1$.

Without loss of generality, we assume that $p_1=1$. In this case, $1<p_2<\infty$
and $\lambda = n_2/p'_2 + m/q$.

Setting $f_1 = (1/\delta^{n_1})
\chi^{}_{\{|y_1|\le \delta\}}$
and letting $\delta\rightarrow 0$ in (\ref{eq:bi:bounded}), we see from Fatou's lemma
that
\begin{equation}\label{eq:e2}
  \left\| \int_{\bbR^{n_2}} \frac{ f_2(y_2) dy_2}
   {(| x|+| y_2|)^{\lambda}} \right\|_{L^q}
 \lesssim   \|f_2\|_{L^{p_2}},\qquad \forall f_2\in L^{p_2}.
\end{equation}
By Lemma~\ref{Lm:L1}, $q\ge p_2$.

(ii)\,\, We show that $q>\min\{p_1,p_2\}$ when
$\max\{p_1,p_2\}=\infty$.

Without loss of generality, assume that
$p_1=\infty$. Then $1<p_2<\infty$ and $\lambda = n_1 + n_2/p'_2 + m/q$.
Setting $f_1 \equiv 1$ in (\ref{eq:bi:bounded}), we have
\[
    \left\| \int_{\bbR^{n_2}} \frac{  f_2(y_{21}, y_{22})   dy_2}
   {( | x-y_{21}| +   |y_{22}|)^{n_2/p'_2+m/q}} \right\|_{L^q}
 \lesssim  \|f_2\|_{L^{p_2}},\qquad \forall f_2\in L^{p_2}.
\]
Now we see from Theorem~\ref{thm:linear} that
$p_2<q<\infty$.

In the followings we assume that $1<p_1, p_2<\infty$.

(iii)\, We prove that $1/q \le 1/p_1 +  1/p_2$.

Let
\[
   f_i(y_i) = \frac{\chi^{}_{\{|y_i|\le 1/2\}}(y_i)}
    {|y_i|^{n_i/p_i} (\log 1/|y_i|)^{(1+\varepsilon)/p_i}},
\]
where $\varepsilon>0$ is a constant.
We have $f_i\in L^{p_i}$, $i=1,2$. When $|x|$ is small enough, we have
\begin{align*}
&\int_{\bbR^{n_1+n_2}} \frac{f_1(y_{11},y_{12}) f_2(y_{21}, y_{22}) dy_1 dy_2}
   {(| x-y_{11}|+| x-y_{21}| + |y_{12}| + |y_{22}|)^{\lambda}} \\
&\ge
\int_{\substack{|x|^2 \le |y_i|\le |x|\\
 1\le i \le 2}} \frac{f_1(y_{11},y_{12}) f_2(y_{21}, y_{22}) dy_1 dy_2}
   {(| x-y_{11}|+| x-y_{21}| + |y_{12}| + |y_{22}|)^{\lambda}}\\
&\gtrsim
   \frac{1}{|x|^{m/q} (\log 1/|x|)^{
   (1+\varepsilon)(1/p_1+1/p_2)} }.
\end{align*}
If $1/q > 1/p_1+ 1/p_2$, then there is some $\varepsilon>0$ such that
$(1+\varepsilon)(1/p_1+1/p_2)q < 1$.
Consequently,
\[
\left\|
\int_{\bbR^{n_1+n_2}} \frac{f_1(y_{11},y_{12}) f_2(y_{21}, y_{22}) dy_1 dy_2}
   {(| x-y_{11}|+| x-y_{21}| + |y_{12}| + |y_{22}|)^{\lambda}} \\
\right\|_{L^q} = \infty,
\]
which contradicts (\ref{eq:bi:bounded}).
Hence  $1/q \le 1/p_1+ 1/p_2$.

(iv)\, We prove that $1/q < 1/p_1+ 1/p_2$ when $\min\{n_1,n_2\}=m$ or
$1/p_1 + 1/p_2 <1$.

Assume on the contrary that $1/q = 1/p_1+ 1/p_2$.
There are three cases.

(a)\,\, $n_1=n_2=m$.

In this case,  $\lambda  = n_1/p'_1+n_2/p'_2 +m/q=n_1+n_2$,
which contradicts the hypothesis.

(b)\,\, $\min\{n_1,n_2\}=m$ and $\max\{n_1,n_2\}>m$.

Without loss of generality, we assume that $n_1=m$ and $n_2>m$.
Denote $y_2 = (y_{21}, y_{22})$, where
$y_{21}\in\bbR^m$ and $y_{22}\in \bbR^{n_2-m}$.
Let
\[
  f_1(y_1)=\chi^{}_{\{|y_1|\le 1\}}(y_1)
  \quad\mathrm{and}\quad
   f_2(y_2) = \frac{\chi^{}_{\{|y_2|\le 1/2\}}(y_2)}
      {|y_{22}|^{(n_2-m)/p_2} (\log 1/|y_{22}| )^{(1+\varepsilon)/p_2}},
\]
where $\varepsilon>0$ is a constant such that
$(1+\varepsilon)/p_2<1$.

For $\delta>0$ small enough and $|x|\le \delta$,
we have
\begin{align*}
&\int_{\bbR^{m+ n_2}}
  \frac{f_1(y_1) f_2(y_2)   dy_1 dy_2}
   {(| x-y_1|+| x-y_{21}| +   |y_{22}|)^{\lambda}}
  \\
&\ge
\int_{\substack{|y_1-x|\le |y_{22}| \\    |y_{21}-x|\le |y_{22}|
  \\
  |y_{22}|\le \delta}}
  \frac{   dy_1 dy_{21} }
   {(| x-y_1|+| x-y_{21}| +   |y_{22}|)^{\lambda}
   }\\
&\qquad \times
   \frac{dy_{22}}{|y_{22}|^{(n_2-m)/p_2} (\log(1/|y_{22}|))^{(1+\varepsilon)/p_2}}
  \\
&\gtrsim
   \int_{|y_{22}|\le \delta}
     \frac{dy_{22}}{|y_{22}|^{n_2-m}
           (\log(1/|y_{22}|))^{(1+\varepsilon)/p_2}}\\
&=\infty.
\end{align*}
Hence
$\|I_{\lambda,D}(f_1,f_2)\|_{L^q}
  = \infty$,
which is a contradiction.

 (c)\,\, $\min\{n_1,n_2\}>m$ and $1/p_1 + 1/p_2 <1$.

In this case, $q>1$.
we see from (\ref{eq:bi:bounded}) that for any $h\in L^{q'}$,
\begin{align*}
\left|\int_{\bbR^{n_1+ n_2+m}}
  \frac{f_1(y_{11},y_{12}) f_2(y_{21}, y_{22}) h(x)dx dy_1 dy_2}
   {(| x-y_{11}|+| x-y_{21}| + |y_{12}|+  |y_{22}|)^{\lambda}}\right|
 \lesssim \|f_1\|_{L^{p_1}}   \|f_2\|_{L^{p_2}}\|h\|_{L^{q'}}.
\end{align*}
Hence
\begin{align}
\left\|\int_{\bbR^{m+n_1}}
  \frac{f_1(y_{11},y_{12})   h(x)dx dy_1 }
   {(| x-y_{11}|+| x-y_{21}| + |y_{12}|  + |y_{22}|)^{\lambda}}
   \right\|_{L_{y_2}^{p'_2}}
 \lesssim \|f_1\|_{L^{p_1}}  \|h\|_{L^{q'}}. \label{eq:e11}
\end{align}
Take
\[
  h(x)=\chi^{}_{\{|x|\le 1\}}(x)
  \quad\mathrm{and}\quad
   f_1(y_1) = \frac{\chi^{}_{\{|y_1|\le 1/2\}}(y_1)}
      {|y_{12}|^{(n_1-m)/p_1} (\log 1/|y_{12}| )^{(1+\varepsilon)/p_1}}.
\]
For $\delta>0$ small enough and $|y_2|\le \delta$, we have
\begin{align*}
& \int_{\bbR^{m+n_1}}
  \frac{f_1(y_{11},y_{12})   h(x)dx dy_1 }
   {(| x-y_{11}|+| x-y_{21}| +|y_{12}|  + |y_{22}|)^{\lambda}}\\
  &\gtrsim
 \int_{\substack{|x-y_{21}|\le |y_{22}| \\
 |y_{11}-y_{21}|\le |y_{22}| \\
 |y_{22}|^2\le |y_{12}|\le |y_{22}|
 }}
  \frac{ dx dy_1 }
   {(| x-y_{11}|+| x-y_{21}|  +|y_{12}|   + |y_{22}|)^{\lambda}} \\
&\qquad \times    \frac{1}
{|y_{12}|^{(n_1-m)/p_1} (\log(1/|y_{12}|))^{(1+\varepsilon)/p_1}}\\
  &\gtrsim
     \frac{1}{|y_{22}|^{(n_2-m)/p'_2}
     (\log(1/|y_{22}|))^{(1+\varepsilon)/p_1}} .
\end{align*}
Since $1/p_1 < 1/p'_2$,
we can choose
 $\varepsilon>0$ small enough
such that
$p'_2(1+\varepsilon)/p_1<1$. Thus
\begin{align*}
\left\| \frac{\chi_{\{|y_2|\le \delta\}}(y_2)}{|y_{22}|^{(n_2-m)/p'_2}
     (\log(1/|y_{22}|))^{(1+\varepsilon)/p_1}}\right\|_{L_{y_2}^{p'_2}}
     =\infty,
\end{align*}
which contradicts (\ref{eq:e11}).

\subsection{The case $r_1= 0$ and $r_2  = m$}
\label{subsec:r1=0}

In this case, $I_{\lambda,D}$ is bounded if and only if
\begin{equation}\label{eq:e16}
 \left\| \int_{\bbR^{n_1+n_2}}
   \frac{f_1(y_1) f_2(y_2)dy_1 dy_2}{(|y_1| + | x-y_{21}| + |y_{22}|)^{\lambda}}
   \right\|_{L_x^q}
   \lesssim \|f_1\|_{L^{p_1}} \|f_2\|_{L^{p_2}}.
\end{equation}

First, we show that $q\ge 1$. 
If $q<1$, 
take some non-negative functions $f_i\in L^{p_i}$, $i=1,2$.
We see from Minkowski's inequality that 
\begin{align*}
& \left\| \int_{\bbR^{n_1+n_2}}
   \frac{f_1(y_1) f_2(y_2)dy_1 dy_2}{(|y_1| + | x-y_{21}| + |y_{22}|)^{\lambda}}
   \right\|_{L_x^q} \\
& \ge
 \int_{\bbR^{n_1+n_2}}
   \left\|\frac{f_1(y_1) f_2(y_2)}{(|y_1| + | x-y_{21}| + |y_{22}|)^{\lambda}}\right\|_{L_x^q} dy_1 dy_2
     \\
&\approx 
 \int_{\bbR^{n_1+n_2}}
    \frac{f_1(y_1) f_2(y_2)}{(|y_1|   + |y_{22}|)^{\lambda-m/q}}  dy_1 dy_2.
\end{align*}
By (\ref{eq:e16}), we get 
\[
  \left|\int_{\bbR^{n_1+n_2}}
    \frac{f_1(y_1) f_2(y_2)}{(|y_1|   + |y_{22}|)^{\lambda-m/q}}  dy_1 dy_2\right|
  \lesssim \|f_1\|_{L^{p_1}} \|f_2\|_{L^{p_2}},  
\]
which is impossible when $p_2>1$.

For the case $p_2=1$, by setting $f_2= (1/\delta^{n_2}) 
\chi^{}_{\{|y_2|\le \delta\}}$ 
and letting $\delta\rightarrow 0$, we see 
from Fatou's lemma that 
\[
  \left|\int_{\bbR^{n_1}}
    \frac{f_1(y_1) }{|y_1|^{\lambda-m/q}}  dy_1\right|
  \lesssim \|f_1\|_{L^{p_1}},\qquad  \forall f\in L^{p_1},
\]
which is impossible since $ 1/|y_1|^{\lambda-m/q}\not\in L^{p'_1}$. Hence $q\ge 1$.

Next, we show that $q\ge p_2$. For $z=(0,\ldots,0,a)\in\bbR^m$,
where only the last coordinate is nonzero,
set $f_{2,z}(y_{21},y_{22}) = f_2(y_{21}-z, y_{22})$. We have
\[
  I_{\lambda,D}(f_1,f_{2,z})(x)
  =I_{\lambda,D}(f_1,f_2)(x-z).
\]
By Proposition~\ref{prop:commute},  $q\ge p_2$.

Now we see from (\ref{eq:e16}) that
 $I_{\lambda,D}$ is bounded if and only if for any $f_1\in L^{p_1}$, $f_2\in L^{p_2}$ and $h\in L^{q'}$,
\begin{equation}\label{eq:e17}
 \left| \int_{\bbR^{n_1+n_2+m}}
   \frac{f_1(y_1) f_2(y_2) h(x)  dx dy_1 dy_2}{(|y_1| + | x-y_{21}| + |y_{22}|)^{\lambda}}\right|
   \lesssim \|f_1\|_{L^{p_1}} \|f_2\|_{L^{p_2}} \|h\|_{L^{q'}}.
\end{equation}
There are three  cases.

(i)\,\, $p_2=1$.

In this case, $1<p_1<\infty$.
Set $f_2 = (1/\delta^{n_2}) \chi^{}_{\{|y_2|\le \delta\}}$.
By letting $\delta\rightarrow 0$, we see from
(\ref{eq:e17}) that
\[
   \left| \int_{\bbR^{n_1+m}}
   \frac{f_1(y_1)   h(x)  dx dy_1 }{(|y_1| + | x |  )^{\lambda}}\right|
   \lesssim \|f_1\|_{L^{p_1}}   \|h\|_{L^{q'}}.
\]
By Lemma~\ref{Lm:L1}, $q \ge p_1$.

(ii)\,\, $p_1=1$ or $p_1=\infty$.

In this case, $1<p_2<\infty$.
If $p_1=1$, by setting $f_1 = (1/\delta^{n_1}) \chi^{}_{\{|y_1|\le \delta\}}$ and letting $\delta\rightarrow 0$, we see from
(\ref{eq:e17}) that
\begin{equation}\label{eq:e18}
   \left| \int_{\bbR^{n_2+m}}
   \frac{f_2(y_2)   h(x)  dx dy_2 }{(|x-y_{21}| + | y_{22} |
      )^{n_2/p'_2 + m/q}}\right|
   \lesssim \|f_2\|_{L^{p_2}}   \|h\|_{L^{q'}}.
\end{equation}
Now we see from Theorem~\ref{thm:linear} that
$q>p_2$.

If $p_1=\infty$, by setting $f_1 \equiv 1$, we also have (\ref{eq:e18}).
Hence $q > p_2$.

(iii)\, $1<p_1, p_2<\infty$.

First,  we prove that $q>p_2$ when $n_2=m$.
Assume on the contrary that $q=p_2$. Then $\lambda = n_1/p'_1 + m$.
In this case, (\ref{eq:e16})
turns out to be
\begin{equation}\label{eq:e19}
 \left\| \int_{\bbR^{n_1+m}}
   \frac{f_1(y_1) f_2(y_2)dy_1 dy_2}{(|y_1| + | x-y_2| )^{\lambda}}
   \right\|_{L_x^q}
   \lesssim \|f_1\|_{L^{p_1}} \|f_2\|_{L^{p_2}}.
\end{equation}
Set
\[
     f_1 (y_1) = \frac{\chi^{}_{\{|y_1|\le 1/2\}}(y_1)}
       {|y_1|^{n_1/p_1} (\log 1/|y_1|)^{(1+\varepsilon)/p_1}}
     \quad
     \mbox{and}
     \quad
     f_2  = \chi^{}_{\{|y_2|\le 1\}},
\]
where $\varepsilon>0$ satisfies $1+\varepsilon < p_1$.
We have $f_1\in L^{p_1}$. For $\delta>0$ small enough
and $|x|, |y_2|\le \delta$, we have
\begin{align*}
&
\int_{\bbR^{n_1+m}}
   \frac{f_1(y_1) f_2(y_2)dy_1 dy_2}{(|y_1| + | x-y_2| )^{\lambda}}
  \\
&\ge
   \int_{\substack{|x-y_2|^2 \le |y_1|\le |x-y_2| \\
     |y_2|\le \delta}}
   \frac{ dy_1 dy_2}{(|y_1| + | x-y_2| )^{n_1/p'_1+m}
   |y_1|^{n_1/p_1} (\log 1/|y_1|)^{(1+\varepsilon)/p_1}
   } \\
&\gtrsim
   \int_{ |y_2|\le \delta}
   \frac{  dy_2}{  | x-y_2|^m
    (\log 1/|x-y_2|)^{(1+\varepsilon)/p_1}
   } \\
  &=\infty,
\end{align*}
which contradicts (\ref{eq:e19}). Hence $q>p_2$.

Next we prove that
$q>p_2$ when $n_2>m$ and $1/p_1 + 1/p_2 <1$.

Again, assume that $q=p_2$.
Set
\begin{align*}
  f_1(y_1) &= \frac{\chi^{}_{\{|y_1|\le 1/2\}}(y_1)}
     {|y_1|^{n_1/p_1} (\log 1/|y_1|)^{(1+\varepsilon)/p_1}},
  \\
  f_2(y_2) &= \frac{\chi^{}_{\{|y_2|\le 1/2\}}(y_1)}
  {|y_{22}|^{(n_2-m)/p_2} (\log 1/|y_{22}|)^{(1+\varepsilon)/p_2}},
\end{align*}
where
$\varepsilon>0$ satisfying
$(1+\varepsilon)/p_1 +(1+\varepsilon)/p_2<1$.
Then we have $f_i\in L^{p_i}$.
Moreover, for $\delta$ small enough,
\begin{align*}
&\int_{\bbR^{n_1+n_2}}
   \frac{f_1(y_1) f_2(y_2) dy_1 dy_2}{(|y_1| + | x-y_{21}| + |y_{22}|)^{\lambda}}
   \\
&\ge
  \int_{\substack{|y_{22}|^2\le |y_1|\le |y_{22}| \\
    |x-y_{21}|\le |y_{22}| \\
    |y_{22}|\le \delta }}
    \frac{  dy_1 dy_{21}}
    {(|y_1| + | x-y_{21}| + |y_{22}|)^{\lambda}
       }
   \\
 &\qquad \times \frac{dy_{22}}{|y_1|^{n_1/p_1} (\log 1/|y_1|)^{(1+\varepsilon)/p_1}
    |y_{22}|^{(n_2-m)/p_2} (\log 1/|y_{22}|)^{(1+\varepsilon)/p_2}}
   \\
&\gtrsim
    \int_{  |y_{22}|\le \delta }
    \frac{dy_{22}}{
    |y_{22}|^{ n_2-m } (\log 1/|y_{22}|)^{(1+\varepsilon)(1/p_1+1/p_2)}}
   \\
&=\infty.
\end{align*}
Again, we get a contradiction.

\subsection{The case $0<r_1<r_2=m$ or $0<r_2<r_1=m$}

We consider only the case $0<r_1<r_2=m$. The other case can be proved similarly.

Since $\rank(D_1)=r_1<m$, there exist invertible matrices $P_1$ and  $Q$
such that
\[
  P_1 D_1 Q=   \begin{pmatrix}
   I_{r_1} & 0\\
   0 & 0
   \end{pmatrix},
\]
where $I_r$ is the $r\times r$ identity matrix.
Since $\rank(D_2Q)=\rank(D_2)=m$, there exists an $n_2\times n_2$ invertible
matrix $P_2$ such that
\[
  P_2 D_2 Q = \begin{pmatrix}
   I_m  \\
   0
   \end{pmatrix},
\]
where $I_m$ is the $m\times m$ identity matrix.

Note that
\[
  |D_1x-y_1| + |D_2x-y_2|
  \approx
  |P_1D_1x - P_1 y_1|
  + |P_2D_2 x - P_2 y_2|.
\]
By a change of variable of the form $x\rightarrow Qx$ and
replacing $f_1(P_1\cdot)$ and $f_2(P_2\cdot)$ for
$f_1$ and $f_2$ respectively, we know that
\[
 \left\|
  \int_{\bbR^{n_1+n_2}}
    \frac{f_1(y_1) f_2(y_2)dy_1 dy_2 }
    {(|D_1x-y_1| + |D_2x-y_2|)^{\lambda}}
    \right\|_{L_x^q}
   \lesssim \|f_1\|_{L^{p_1}} \|f_2\|_{L^{p_2}}
\]
is equivalent to
\begin{equation}\label{eq:1e16}
 \left\| \int_{\bbR^{n_1+n_2}}
   \frac{f_1(y_1) f_2(y_2)dy_1 dy_2}{(|x_1-y_{11}| + |y_{12}| + | x-y_{21}| + |y_{22}|)^{\lambda}}
   \right\|_{L_x^q}
   \lesssim \|f_1\|_{L^{p_1}} \|f_2\|_{L^{p_2}},
\end{equation}
where
$x=(x_1,x_2)$, $x_1\in\bbR^{r_1}$,
$x_2 \in\bbR^{m-r_1}$,
$y_{i1}\in\bbR^{r_i}$ and $y_{i2}\in \bbR^{n_i-r_i}$.
If $n_1=r_1$ or $n_2=r_2$, then we delete the variable $y_{12}$
or $y_{22}$ accordingly.

The same arguments as in the case $(r_1,r_2)= (0,m)$ show
that   $q\ge p_2$.
It remains to show that
$q\ge  p_1 $ when $p_2=1$,
and $q>p_2$ when $p_1=\infty$.

Note that (\ref{eq:1e16}) is equivalent to
\begin{equation}\label{eq:1e17}
 \left| \int_{\bbR^{n_1+n_2+m}}
   \frac{f_1(y_1) f_2(y_2) h(x)  dx dy_1 dy_2}{(|x_1-y_{11}| + |y_{12}| + | x-y_{21}| + |y_{22}|)^{\lambda}}\right|
   \lesssim \|f_1\|_{L^{p_1}} \|f_2\|_{L^{p_2}} \|h\|_{L^{q'}}.
\end{equation}

There are two cases.

(i)\,\, $p_2=1$.

In this case, $1<p_1<\infty$.
Set $f_2 = (1/\delta^{n_2}) \chi^{}_{\{|y_2|\le \delta\}}$.
By letting $\delta\rightarrow 0$, we see from
(\ref{eq:1e17}) that
\[
   \left| \int_{\bbR^{n_1+m}}
   \frac{f_1(y_1)   h(x)  dx dy_1 }{(|x_1-y_{11}| + |y_{12}| + | x |  )^{\lambda}}\right|
   \lesssim \|f_1\|_{L^{p_1}}   \|h\|_{L^{q'}}.
\]
Note that $|x_1-y_{11}| + |y_{12}| + | x | \approx |x| + |y_1|$.
The above inequality is equivalent to
\[
   \left| \int_{\bbR^{n_1+m}}
   \frac{f_1(y_1)   h(x)  dx dy_1 }{(|y_1| + | x |  )^{\lambda}}\right|
   \lesssim \|f_1\|_{L^{p_1}}   \|h\|_{L^{q'}},
   \quad \forall f_1\in L^{p_1},\, h\in L^{q'}.
\]
By Lemma~\ref{Lm:L1}, $p_1 \le q$.

(ii)\,\,   $p_1=\infty$.

In this case, $1<p_2<\infty$.
by setting $f_1 \equiv 1$, we see from
(\ref{eq:1e17}) that
\begin{equation}\label{eq:1e18}
   \left| \int_{\bbR^{n_2+m}}
   \frac{f_2(y_2)   h(x)  dx dy_2 }{(|x-y_{21}| + | y_{22} |
      )^{n_2/p'_2 + m/q}}\right|
   \lesssim \|f_2\|_{L^{p_2}}   \|h\|_{L^{q'}}.
\end{equation}
By Theorem~\ref{thm:linear}, we have
$p_2 < q$.

\subsection{The case $0<r_1, r_2<m$}

Since $I_{\lambda,D}$ is bounded, we have
\begin{equation}\label{eq:e23}
 \left\|
  \int_{\bbR^{n_1+n_2}}
    \frac{f_1(y_1) f_2(y_2)dy_1 dy_2 }
    {(|D_1x-y_1| + |D_2x-y_2|)^{\lambda}}
    \right\|_{L_x^q}
   \lesssim \|f_1\|_{L^{p_1}} \|f_2\|_{L^{p_2}}.
\end{equation}
Denote $D = \binom{D_1}{D_2}$.
Since $r_1=\rank (D_1)<m$, there is some $m\times m$ invertible matrix
$Q_1$ such that the last $m-r_1$ columns of $D_1Q_1$ are zero vectors.
On the other hand, since $\rank(D) = m$, the last $m-r_1$ columns of
$D_2Q_1$ must be linearly independent.
Hence there exist $1\le j_1 <\ldots<j_{r_1+r_2-m}\le r_1 $
such that the $j_1$-th, $\ldots$, the $j_{r_1+r_2-m}$-th
columns and the last
$m-r_1$ columns of $D_2Q_1$
are linearly independent.
Consequently, there is some $m\times m$ invertible matrix
$Q_2$ such that the first $m-r_2$ columns of $D_2Q_1Q_2$ are zero vectors
and
the last $m-r_1$ columns of $D_1Q_1Q_2$ are zero vectors.
That is,
\[
  \begin{pmatrix}
  D_1 \\
  D_2
  \end{pmatrix}
  Q_1 Q_2  =
  \begin{pmatrix}
  *  & *  & 0_{n_1 \times (m-r_1)} \\
  0_{n_2\times (m-r_2)} & *   & *
  \end{pmatrix}.
\]
Since $\rank(D_iQ_1Q_2)=\rank(D_i)$, there exist
$n_1\times n_1$ invertible matrix $P_1$
and
$n_2\times n_2$ invertible matrix $P_2$
such that
\[
P_1 D_1 Q_1 Q_2 =
  \begin{pmatrix}
  I_{r_1}   & 0  \\
    0  & 0
    \end{pmatrix}
    \quad
    and
    \quad
 P_2 D_2 Q_1Q_2=
  \begin{pmatrix}
  0    & I_{r_2}\\
    0 & 0
  \end{pmatrix}.
\]
Note that
\[
  |D_1x-y_1| + |D_2x-y_2|
  \approx
  |P_1D_1x-P_1y_1| + |P_2D_2x-P_2y_2|.
\]
By replacing $f_1(P_1\cdot)$ and $f_2(P_2\cdot)$
for $f_1$ and $f_2$ respectively and
a change of variables of the form
$(x,y_1,y_2)\rightarrow (Q_1Q_2x, P_1^{-1}y_1, P_2^{-1}y_2)$,
(\ref{eq:e23}) turns out to be
\begin{align}
& \left\|
  \int_{\bbR^{n_1+n_2}}
    \frac{f_1(y_1) f_2(y_2)dy_1 dy_2 }
    {K(x,y_1,y_2)^{\lambda}}
    \right\|_{L_x^q}
   \lesssim \|f_1\|_{L^{p_1}} \|f_2\|_{L^{p_2}},\label{eq:r1r2}
\end{align}
where
\[
K(x,y_1,y_2) =  |x_1 - y_{11}| + |x_2  -  y_{12}| + |y_{13}|
     +  |x_2 - y_{21}| +  |x_3  -  y_{22}|
      + |y_{23}| ,
\]
$x=(x_1,x_2,x_3)$, $y_i=(y_{i1}, y_{i2}, y_{i3})$,
$x_1, y_{11}\in\bbR^{m-r_2}$,
$x_2, y_{12},y_{21} \in \bbR^{r_1+r_2-m}$,
$x_3, y_{22}\in \bbR^{m-r_1}$,
$y_{13}\in\bbR^{n_1-r_1}$
and $y_{23}\in\bbR^{n_2-r_2}$.

Since $r_1, r_2<m$, similar arguments as that in
Subsection \ref{subsec:r1=0}
show that $q\ge p_i$, $i=1,2$.  Hence $q\ge \max\{p_1,p_2\}$.

We prove the rest in two cases.

(i)\,\, $\min\{p_1,p_2\}=1$.

We show that $q>p_2$ when $p_1=1$, $r_2=n_2$ and $r_1+r_2=m$.
In this case,
$K(x,y_1,y_2) = |x_1-y_{11}| + |y_{13}| + |x_3-y_2|$
and (\ref{eq:r1r2}) becomes
\[
  \left\|
  \int_{\bbR^{n_1+n_2}}
    \frac{f_1(y_1) f_2(y_2)dy_1 dy_2 }
    {(|x_1-y_{11}| + |y_{13}| + |x_3-y_2|)^{\lambda}}
    \right\|_{L_x^q}
   \lesssim \|f_1\|_{L^{p_1}} \|f_2\|_{L^{p_2}} .
\]
Setting $f_1 = (1/\delta^{n_1})\chi^{}_{\{|y_1|\le \delta\}}$
and letting $\delta\rightarrow 0$, we get
\[
  \left\|
  \int_{\bbR^{ n_2}}
    \frac{ f_2(y_2)  dy_2 }
    {(|x_1 |  + |x_3-y_2|)^{n_2/p'_2 + m/q}}
    \right\|_{L_x^q}
   \lesssim   \|f_2\|_{L^{p_2}},
\]
which is equivalent to
\[
  \left\|
  \int_{\bbR^{m}}
    \frac{ h(x_1,x_3) dx_1dx_3 }
    {(|x_1 |  + |x_3-y_2|)^{n_2/p'_2 + m/q}}
    \right\|_{L_{y_2}^{p'_2}}
   \lesssim   \|h\|_{L^{q'}}.
\]
By Theorem~\ref{thm:linear}, we have $p'_2 >q'$. Hence
$q>p_2$.

Similarly
we can prove that $q>p_1$ when $p_2=1$, $r_1=n_1$ and $r_1+r_2=m$.

(ii)\,\, $1<p_1,p_2<\infty$.

First we show that   $q>p_1$
when
$p_1=p_2$, $r_1+r_2=m$ and $n_i=r_i$ for some $i$.

Assume on the contrary that $q=p_1=p_2$.
Since $\lambda < n_1+n_2$,
we have $(n_1,n_2)\ne (r_1,r_2)$. Hence $n_1>r_1$ or $n_2>r_2$.
Without loss of generality, we assume that  $n_1=r_1$ and $n_2>r_2$.
In this case, $\lambda = r_1+r_2 + (n_2-r_2)/p'_2$.
We see from
(\ref{eq:r1r2}) that
\begin{equation}\label{eq:e24}
  \left\|
  \int_{\bbR^{r_1+m}}
    \frac{h(x)f_1(y_1)dx dy_1 }
    {(|x_1-y_1|   + |x_3-y_{22}| + |y_{23}| )^{\lambda}}
    \right\|_{L_{y_2}^{p'_2}}
   \lesssim \|f_1\|_{L^{p_1}} \|h\|_{L^{q'}}.
\end{equation}
Set $h = \chi^{}_{\{|x|\le 1\}}$
and $f_1 =    \chi^{}_{\{|y_1|\le 1\}}$.
For $\delta>0$ small enough and $|y_2|\le\delta$,
\begin{align*}
 &   \int_{\bbR^{r_1+m}}
    \frac{h(x)f_1(y_1)dx dy_1 }
    {(|x_1-y_1| + |x_3-y_{22}| + |y_{23}| )^{\lambda}}
         \\
 &\ge
   \int_{\substack{
   |y_1|\le \delta\\
   |x_1-y_1|\le |y_{23}| \\
     |x_3 - y_{22}|\le |y_{23}|
   }}
    \frac{h(x)f_1(y_1)dx dy_1 }
    {(|x_1-y_1| +|x_3-y_{22}| + |y_{23}| )^{\lambda}}
         \\
  &\gtrsim
     \frac{\delta^{r_1}}{|y_{23}|^{(n_2-r_2)/p'_2}},
\end{align*}
which contradicts (\ref{eq:e24}).

Next we show that $q>p_1$ when $p_1=p_2>2$,
$m=r_1+r_2$,
$n_1>r_1$ and $n_2>r_2$.

Again, assume that $q = p_1=p_2>2$. We have
\[
    \left\|
  \int_{\bbR^{r_1+m}}
    \frac{h(x)f_1(y_1)dx dy_1 }
    {(|x_1-y_{11}| +|y_{13}|  + |x_3-y_{22}| + |y_{23}| )^{\lambda}}
    \right\|_{L_{y_2}^{p'_2}}
   \lesssim \|f_1\|_{L^{p_1}} \|h\|_{L^{q'}}.
\]
Set
\[
  h(x) = \chi^{}_{\{|x|\le 1\}}(x)
  \quad\mbox{and}\quad
 f_1(y_1) =
  \frac{ \chi^{}_{\{|y_1|\le 1/2\}}(y_1)}
  { |y_{13}|^{(n_1-r_1)/p_1}
  (\log1/|y_{13}|)^{(1+\varepsilon)/p_1}}.
\]
For $\delta>0$ small enough and $|y_2|\le\delta$,
\begin{align*}
 &   \int_{\bbR^{n_1+m}}
    \frac{h(x)f_1(y_1)dx dy_1 }
    {(|x_1-y_{11}| + |y_{13}|+ |x_3-y_{22}| + |y_{23}| )^{\lambda}}
         \\
 &\ge
   \int_{\substack{
   |y_{11}|\le \delta\\
   |x_1-y_{11}|\le |y_{23}| \\
   |y_{23}|^2\le |y_{13}|\le |y_{23}|\\
     |x_3 - y_{22}|\le |y_{23}|
   }}
    \frac{h(x)f_1(y_1)dx dy_1 }
    {(|x_1-y_{11}| + |y_{13}|+|x_3-y_{22}| + |y_{23}| )^{\lambda}}
         \\
&  \qquad \times \frac{1}{|y_{13}|^{(n_1-r_1)/p_1}
  (\log1/|y_{13}|)^{(1+\varepsilon)/p_1}}     \\
  &\gtrsim
     \frac{\delta^{r_1}}{|y_{23}|^{(n_2-r_2)/p'_2}
     (\log1/|y_{23}|)^{(1+\varepsilon)/p_1}}.
\end{align*}
Since $p_1=p_2>2$, then there is some $\varepsilon>0$ such that
$p'_2(1+\varepsilon)/p_1<1$. Hence
\[
  \left\|   \int_{\bbR^{n_1+m}}
    \frac{h(x)f_1(y_1)dx dy_1 }
    {(|x_1-y_{11}| + |y_{13}|+ |x_3-y_{22}| + |y_{23}| )^{\lambda}}
    \right\|_{L_{y_2}^{p'_2}} =\infty.
\]
This completes the proof of the necessity.

\section{Proof of Theorem~\ref{thm:bilinear}: The Sufficiency}

In this section, we give the proof of the sufficiency
part in Theorem~\ref{thm:bilinear}.

First, we consider the case $q=\infty$.
In this case, $1<p_1, p_2<\infty$ and $1/p_1+1/p_2\ge 1$.
Note that
\begin{align*}
I_{\lambda,D}(f_1,f_2)(x)
&=   \int_{\bbR^{n_1+n_2}}
    \frac{f_1(y_1) f_2(y_2)dy_1 dy_2 }
    {(|D_1x-y_1| + |D_2x-y_2|)^{\lambda}} \\
& =   \int_{\bbR^{n_1+n_2}}
    \frac{f_1(y_1+D_1x) f_2(y_2+D_2x)dy_1 dy_2 }
    {(|y_1| + |y_2|)^{\lambda}}.
\end{align*}
By Lemma~\ref{Lm:L1}, we have
\[
  |I_{\lambda,D}(f_1,f_2)| \lesssim \|f_1\|_{L^{p_1}} \|f_2\|_{L^{p_2}},
  \qquad \forall x\in\bbR^m.
\]
Hence $I_{\lambda,D}$ is bounded from
$L^{p_1}\times L^{p_2}$ to $L^q$.

For the case $q<\infty$, we split the proof in several subsections.

\subsection{The case $r_1=r_2 = m$}

As in Subsection~\ref{subsec:r1r2=m}, we only need to prove that
for $f_1\in L^{p_1}$ and $f_2\in L^{p_2}$,
\begin{equation}\label{eq:4:e1}
    \left\| \int_{\bbR^{n_1+n_2}} \frac{f_1(y_{11},y_{12}) f_2(y_{21}, y_{22}) dy_1 dy_2}
   {(| x-y_{11}|+| x-y_{21}| + |y_{12}| + |y_{22}|)^{\lambda}} \right\|_{L^q}
 \lesssim \|f_1\|_{L^{p_1}} \|f_2\|_{L^{p_2}},
\end{equation}
where $y_i=(y_{i1},y_{i2})$,  $x,y_{i1}\in\bbR^m$,
$y_{i2}\in\bbR^{n_i-m}$, $i=1,2$.
There are two subcases.

(A1)\,\, $1<p_1, p_2<\infty$

We prove the conclusion in two  subcases.

(A1)(a) \,\,  $0<1/q<1/p_1 + 1/p_2$.

Choose $q_1, q_2$ such that
\[
  \frac{1}{q_1}  = \frac{1}{q}
                  \cdot \frac{1/p_1}{1/p_1 + 1/p_2}
                  \quad\mathrm{and}\quad
  \frac{1}{q_2}  = \frac{1}{q}
                  \cdot \frac{1/p_2}{1/p_1 + 1/p_2}.
\]
Then $1/q = 1/q_1 + 1/q_2$ and $1<p_i<q_i<\infty$.
Let
\[
  \lambda_i: = \frac{n_i}{p'_i} + \frac{m}{q_i},\qquad i=1,2.
\]
We have $\lambda = \lambda_1 + \lambda_2$
and $0<\lambda_i<n_i$.
By Theorem~\ref{thm:linear},
\[
  \left\| \int_{\bbR^{n_i}}
    \frac{f_i(y_i) dy_i}{(|x-y_{i1}| + |y_{i2}|)^{\lambda_i}}
  \right\|_{L^{q_i}}
  \lesssim \|f_i\|_{L^{p_i}}.
\]
Hence
\begin{align*}
&
  \left\| \int_{\bbR^{n_1+n_2}}
    \frac{f_1(y_1) f_2(y_2) dy_1 dy_2}
     {(|x-y_{11}| + |y_{12}| + |x-y_{21}| + |y_{22}|)^{\lambda}}
  \right\|_{L_x^q}\\
&\le
  \left\|
  \int_{\bbR^{n_1}}
    \frac{f_1(y_1) dy_1}{(|x-y_{11}| + |y_{12}|)^{\lambda_1}}
  \int_{\bbR^{n_2}}
    \frac{f_2(y_2) dy_2}{(|x-y_{21}| + |y_{22}|)^{\lambda_2}}
  \right\|_{L_x^q} \\
&\lesssim \|f_1\|_{L^{p_1}} \|f_2\|_{L^{p_2}}.
\end{align*}

(A1)(b)\,\,   $1/q = 1/p_1 + 1/p_2$.
In this case, $n_1, n_2 >m$ and $1/p_1 + 1/p_2\ge 1$.
Let us prove (\ref{eq:4:e1}).

For any $f_1\in L^{p_1}$ and $f_2\in L^{p_2}$, we have
\begin{align*}
&  \int_{ \bbR^{n_1+n_2}} \frac{|f_1(y_{11},y_{12}) f_2(y_{21}, y_{22})| dy_1
   dy_2 }
   {(| x-y_{11}|+| x-y_{21}| + |y_{12}| + |y_{22}|)^{\lambda}}
 \\
&\lesssim \int_{\bbR^{n_1+n_2-2m}}
\frac{ M_x f_1(x,y_{12}) M_xf_2(x, y_{22})  dy_{12}
   dy_{22} }
   {( |y_{12}| + |y_{22}|)^{(n_1-m)/p'_1 + (n_2-m)/p'_2}},
\end{align*}
where
\[
  M_x f_1(x,y_{12}) = \sup_{r>0} \frac{1}{r^m} \int_{|y_{11} - x|\le r}
     |f_1(y_{11}, y_{12})| dy_{11}
\]
is the partially maximal function of $f_1$,
and $M_xf_2(x,y_{22})$ is defined similarly.
It follows from Lemma~\ref{Lm:L1} that
\begin{align*}
   \int_{ \bbR^{n_1+n_2}}  \frac{|f_1(y_{11},y_{12}) f_2(y_{21}, y_{22})| dy_1
   dy_2 }
   {(| x-y_{11}|\!+\!| x-y_{21}|\! +\! |y_{12}| \!+\! |y_{22}|)^{\lambda}}
 &\!\lesssim \|M_x f_1(x,\cdot)\|_{L^{p_1}}
\|M_x f_2(x,\cdot)\|_{L^{p_2}}.
\end{align*}
Since $1/q = 1/p_1 + 1/p_2$, by H\"older's inequality, we get
\begin{align*}
& \left\| \int_{\bbR^{n_1+n_2}} \frac{ f_1(y_{11},y_{12}) f_2(y_{21}, y_{22})  dy_1
   dy_2 }
   {(| x-y_{11}|+| x-y_{21}| + |y_{12}| + |y_{22}|)^{\lambda}}
   \right\|_{L_x^q}
\lesssim    \|f_1\|_{L^{p_1}} \|f_2\|_{L^{p_2}}.
\end{align*}

(A2)\,\, $\#\{i:\, 1<p_i<\infty\}=1$.

Without loss of generality, assume that $1<p_2<\infty$.
There are two subcases.

(A2)(a)\,\, $p_1=1$.

In this case, $1<p_2\le q<\infty$. (\ref{eq:4:e1}) is equivalent to
\begin{equation}\label{eq:e15}
    \left\| \int_{\bbR^{n_2 + m}} \frac{ f_2(y_{21}, y_{22})h(x) dx dy_2}
   {(| x-y_{11}|+| x-y_{21}| + |y_{12}|
     + |y_{22}|)^{n_2/p'_2+m/q}} \right\|_{L_{y_1}^{\infty}}
    \!\!  \lesssim  \|f_2\|_{L^{p_2}} \|h\|_{L^{q'}}.
\end{equation}
Note that for any $y_1\in\bbR^{n_1}$,
\begin{align*}
&\int_{\bbR^{n_2 + m}} \frac{ |f_2(y_{21}, y_{22})h(x)| dx dy_2}
   {(| x-y_{11}|+| x-y_{21}| + |y_{12}|+|y_{22}|
      )^{n_2/p'_2+m/q}}\\
&\lesssim
   \int_{\bbR^{2m}} \frac{ \|f_2(y_{21}, \cdot)\|_{L^{p_2}} |h(x)| dx dy_{21}}
   {(| x-y_{11}|+| x-y_{21}|
      )^{m/p'_2+m/q}} \\
&\approx
   \int_{\bbR^{2m}} \frac{ \|f_2(y_{21}, \cdot)\|_{L^{p_2}} |h(x)| dx dy_{21}}
   {(| x-y_{11}|+| y_{11}-y_{21}|
      )^{m/p'_2+m/q}} \\
&=
   \int_{\bbR^{2m}} \frac{ \|f_2(y_{21}+y_{11}, \cdot)\|_{L^{p_2}}
   |h(x+y_{11})| dx dy_{21}}
   {(| x |+|  y_{21}|
      )^{m/p'_2+m/q}} \\
&\lesssim  \|f_2(\cdot+y_{11},\cdot)\|_{L^{p_2}} \|h(\cdot+y_{11})\|_{L^{q'}},
\end{align*}

where we use
Lemma~\ref{Lm:L1} in the last step.
Hence (\ref{eq:e15}) is true.

(A2)(b)\,\, $p_1=\infty$.

In this case, $p_2 < q<\infty$ and $\lambda = n_1+n_2/p'+m/q$.
We have
\begin{align*}
 & \int_{\bbR^{n_1+n_2}} \frac{| f_1(y_{11}, y_{12})
     f_2(y_{21}, y_{22})| dy_1 dy_2}
   {(| x-y_{11}|+| x-y_{21}| + |y_{12}|
     + |y_{22}|)^{\lambda }}   \\
&\lesssim \|f_1\|_{L^{\infty}}
  \int_{\bbR^{n_2}} \frac{|
     f_2(y_{21}, y_{22})|   dy_2}
   {(  | x-y_{21}|
     + |y_{22}|)^{n_2/p'+m/q }}  .
\end{align*}
Now the conclusion follows from Theorem~\ref{thm:linear}.

\subsection{The case $r_1= 0$ and $r_2 = m$}

In this case, we need to show that
\begin{equation}\label{eq:e16:a}
 \left\| \int_{\bbR^{n_1+n_2}}
   \frac{f_1(y_1) f_2(y_2)dy_1 dy_2}{(|y_1| + | x-y_{21}| + |y_{22}|)^{\lambda}}
   \right\|_{L_x^q}
   \lesssim \|f_1\|_{L^{p_1}} \|f_2\|_{L^{p_2}},
\end{equation}
where $y_2 = (y_{21}, y_{22})$, $y_{21}\in\bbR^m$
and $y_{22}\in\bbR^{n_2-m}$.
We prove the conclusion in three subcases.

(B1)  $p_2 = 1$.

In this case, $1<p_1 \le q < \infty$.
It suffices to show that
\begin{equation}\label{eq:e21}
 \left\| \int_{\bbR^{n_1+ m}}
   \frac{f_1(y_1)   h(x)  dx dy_1  }{(|y_1| + | x-y_{21}| + |y_{22}|)^{\lambda}}\right\|_{L_{y_2}^{\infty}}
   \lesssim \|f_1\|_{L^{p_1}}  \|h\|_{L^{q'}}.
\end{equation}
Observe that
\begin{align*}
\int_{\bbR^{n_1+ m}}
   \frac{|f_1(y_1)   h(x)|  dx dy_1  }{(|y_1| + | x-y_{21}| + |y_{22}|)^{\lambda}}
 &  =
\int_{\bbR^{n_1+ m}}
   \frac{|f_1(y_1)   h(x+y_{21})|  dx dy_1  }{(|y_1| + | x| + |y_{22}|)^{n'_1/p_1 + m/q}}
    \\
 &\le
\int_{\bbR^{n_1+ m}}
   \frac{|f_1(y_1)   h(x+y_{21})|  dx dy_1  }{(|y_1| + | x| )^{n'_1/p_1 + m/q}}.
\end{align*}
By Lemma~\ref{Lm:L1}, we have
\[
  \int_{\bbR^{n_1+ m}}
   \frac{|f_1(y_1)   h(x)|  dx dy_1  }{(|y_1| + | x-y_{21}| + |y_{22}|)^{\lambda}}
\lesssim \|f_1\|_{L^{p_1}}  \|h\|_{L^{q'}},
\qquad \forall y_2\in\bbR^{n_2}.
\]
Hence (\ref{eq:e21}) is true.

(B2)\,\,  $ p_2>1$ and  $p_2<q<\infty$.

By H\"older's inequality, we have
\begin{align*}
\int_{\bbR^{n_1+ n_2}}
   \frac{|f_1(y_1)   f_2(y_2)|   dy_1 dy_2  }{(|y_1| + | x-y_{21}| + |y_{22}|)^{\lambda}}
&\lesssim
   \int_{\bbR^{n_2}}
   \frac{  \|f_1\|_{L^{p_1}}  |    f_2(y_2)|    dy_2  }
   {(  | x-y_{21}| + |y_{22}|)^{n_2/p'_2+m/q}}.
\end{align*}
Now we see from Theorem~\ref{thm:linear} that
(\ref{eq:e16:a}) is true.

(B3)\, $ p_2>1$ and $q=p_2$.

In this case, $n_2>m$, $1<p_1,p_2<\infty$ and $1/p_1 + 1/p_2 \ge 1$.
By Minkowski's and Young's inequalities, we have
\begin{align*}
&\left\|\int_{\bbR^{n_1+ n_2}}
   \frac{f_1(y_1)   f_2(y_2)   dy_1 dy_2  }{(|y_1| + | x-y_{21}| + |y_{22}|)^{\lambda}}
   \right\|_{L_x^q} \\
&\lesssim
\int_{\bbR^{n_1+ n_2-m}}
   \frac{f_1(y_1)  \| f_2(\cdot,y_{22})\|_{L^{p_2}}   dy_1 dy_{22}  }{(|y_1| +   |y_{22}|)^{n_1/p'_1 + (n_2-m)/p'_2}}.
\end{align*}
Now it follows from   Lemma~\ref{Lm:L1} that
\begin{align*}
 \left\|\int_{\bbR^{n_1+ n_2}}
   \frac{f_1(y_1)   f_2(y_2)   dy_1 dy_2  }{(|y_1| + | x-y_{21}| + |y_{22}|)^{\lambda}}
   \right\|_{L_x^q}
 \lesssim \|f_1\|_{L^{p_1}} \|f_2\|_{L^{p_2}} .
\end{align*}

\subsection{The case $0<r_1<r_2=m$ or $0<r_2<r_1=m$}

We consider only the case $0<r_1<r_2=m$.
As in the necessity part, it suffices to show that
\begin{equation}\label{eq:1e16:a}
 \left\| \int_{\bbR^{n_1+n_2}}
   \frac{f_1(y_1) f_2(y_2)dy_1 dy_2}{(|x_1-y_{11}| + |y_{12}| + | x-y_{21}| + |y_{22}|)^{\lambda}}
   \right\|_{L_x^q}
   \lesssim \|f_1\|_{L^{p_1}} \|f_2\|_{L^{p_2}},
\end{equation}
where
$x=(x_1,x_2)$, $x_1\in\bbR^{r_1}$,
$x_2 \in\bbR^{m-r_1}$,
$y_{i1}\in\bbR^{r_i}$ and $y_{i2}\in \bbR^{n_i-r_i}$,
$i=1,2$.

We prove the conclusion in four subcases.

(C1)  $p_2 = 1$.

In this case, $1<p_1 \le q < \infty$.  (\ref{eq:1e16:a}) is equivalent to
\begin{equation}\label{eq:1e21}
 \left\| \int_{\bbR^{n_1+ m}}
   \frac{f_1(y_1)   h(x)  dx dy_1  }
   {(|x_1-y_{11}| + |y_{12}| + | x-y_{21}| + |y_{22}|)^{\lambda}}\right\|_{L_{y_2}^{\infty}}
   \lesssim \|f_1\|_{L^{p_1}}  \|h\|_{L^{q'}}.
\end{equation}
Observe that
\begin{align*}
&\int_{\bbR^{n_1+ m}}
   \frac{|f_1(y_1)   h(x)|  dx dy_1  }{(|x_1-y_{11}| + |y_{12}|  + | x-y_{21}| + |y_{22}|)^{\lambda}}\\
 &  =
\int_{\bbR^{n_1+ m}}
   \frac{|f_1(y_1+(y_{21}^{(1)},\ldots,
   y_{21}^{(r_1)}, 0, \ldots, 0
   ))   h(x+y_{21})|  dx dy_1  }{(|x_1-y_{11}| + |y_{12}|  + | x| + |y_{22}|)^{n'_1/p_1 + m/q}}
    \\
 &\lesssim
\int_{\bbR^{n_1+ m}}
   \frac{|f_1(y_1+(y_{21}^{(1)},\ldots,
   y_{21}^{(r_1)}, 0, \ldots, 0
   ))   h(x+y_{21})|
       dx dy_1  }{(|y_1|  + | x| )^{n'_1/p_1 + m/q}}.
\end{align*}
By Lemma~\ref{Lm:L1}, we have
\[
\int_{\bbR^{n_1+ m}}
   \frac{|f_1(y_1)   h(x)|  dx dy_1  }{(|x_1-y_{11}| + |y_{12}|  + | x-y_{21}| + |y_{22}|)^{\lambda}}
   \lesssim \|f_1\|_{L^{p_1}}  \|h\|_{L^{q'}},
  \forall y_2\in\bbR^{n_2}.
\]
Hence (\ref{eq:1e21}) is true.

(C2)\,\, $p_1 = 1$.

In this case, $1<p_2 \le q <\infty$.
It suffices to show that for any $f_2\in L^2$
and $h\in L^{q'}$,
\begin{align}
 \left| \int_{\bbR^{n_2+ m}}
   \frac{f_2(y_2)   h(x)  dx dy_2  }
   {(|x_1-y_{11}| + |y_{12}| + | x-y_{21}| + |y_{22}|)^{\lambda}}
   \right |
   \lesssim \|f_2\|_{L^{p_2}}  \|h\|_{L^{q'}},
\quad \forall y_1\in\bbR^{n_1}. \label{eq:e:e2}
\end{align}
Denote $y_{21}=(y_{211}, y_{212})$, where
$y_{211}\in\bbR^{r_1}$, $y_{212}\in \bbR^{m-r_1}$.
We rewrite the above inequality as
\begin{align*}
&  \left| \int_{\bbR^{n_2+ m}}
   \frac{f_2(y_2)   h(x)  dx dy_2  }
   {(|x_1-y_{11}| \!+\! |y_{12}| \!+\!| x_1-y_{211}|\!+\!|x_2-y_{212}|
    \!+\! |y_{22}|)^{\lambda}}
   \right |
   \lesssim \|f_2\|_{L^{p_2}}  \|h\|_{L^{q'}},
\,\, \forall y_1\in\bbR^{n_1}.
\end{align*}
Using Young's inequality when computing the integration with respect to
$dx_2 dy_{212}$, we get
\begin{align*}
& \left| \int_{\bbR^{n_2+ m}}
   \frac{f_2(y_2)   h(x)  dx dy_2  }
   {(|x_1-y_{11}| \!+\! |y_{12}| \!+\!| x_1-y_{211}|\!+\!|x_2-y_{212}|
    \!+\! |y_{22}|)^{\lambda}}
   \right |
   \\
&\lesssim
  \int_{\bbR^{2r_1+ n_2-m}}
   \frac{\|f_2(y_{211},\cdot,y_{22})\|_{L_{y_{212}}^{p_2}}
       \|h(x_1,\cdot)\|_{L_{x_2}^{q'}}  dx_1 dy_{211}dy_{22}  }
   {(|x_1-y_{11}| \!+\! |y_{12}| \!+\!| x_1-y_{211}|
    \!+\! |y_{22}|)^{(n_2+r_1-m)/p'_2 + r_1/q}}
   \\
&=
  \int_{\bbR^{2r_1+ n_2-m}}
   \frac{\|f_2(y_{211}+y_{11},\cdot,y_{22})\|_{L_{y_{212}}^{p_2}}
       \|h(x_1+y_{11},\cdot)\|_{L_{x_2}^{q'}}  dx_1 dy_{211}dy_{22}  }
   {(|x_1 | + |y_{12}| +| x_1-y_{211}|
    + |y_{22}|)^{(n_2+r_1-m)/p'_2 + r_1/q}}\\
&\lesssim
  \int_{\bbR^{2r_1+ n_2-m}}
   \frac{\|f_2(y_{211}+y_{11},\cdot,y_{22})\|_{L_{y_{212}}^{p_2}}
       \|h(x_1+y_{11},\cdot)\|_{L_{x_2}^{q'}}  dx_1 dy_{211}dy_{22}  }
   {(|x_1 |  +|  y_{211}|
    + |y_{22}|)^{(n_2+r_1-m)/p'_2 + r_1/q}}\\
&\lesssim \|f_2\|_{L^{p_2}} \|h\|_{L^{q'}},
\end{align*}
where we use Lemma~\ref{Lm:L1} in the last step.
Hence (\ref{eq:e:e2}) is true.

(C3)\,\, $p_1 = \infty$.

In this case, $1<p_2 < q <\infty$.
We have
\begin{align*}
&
\left|\int_{\bbR^{n_1+ n_2}}
   \frac{f_1(y_1)   f_2(y_2)   dy_1 dy_2  }
   {(|x_1-y_{11}| + |y_{12}| + | x-y_{21}| + |y_{22}|)^{\lambda}}
   \right|
     \\
&\lesssim
 \int_{\bbR^{n_1 }}
   \frac{\|f_1\|_{L^{\infty}}   f_2(y_2)   dy_2  }
   {(  | x-y_{21}| + |y_{22}|)^{\lambda-n_1}}.
\end{align*}
Now the conclusion follows from Theroem~\ref{thm:linear}.

(C4)\,\, $1<p_1, p_2<\infty$ and $p_2 \le q <\infty$.

Using H\"older's inequality when computing
the integration with respect to
$y_{12}$ and $y_{22}$, we get
\begin{align*}
&
\left|\int_{\bbR^{n_1+ n_2}}
   \frac{f_1(y_1)   f_2(y_2)   dy_1 dy_2  }
   {(|x_1-y_{11}| + |y_{12}| + | x-y_{21}| + |y_{22}|)^{\lambda}}
   \right|
     \\
&\lesssim
 \int_{\bbR^{r_1+m }}
   \frac{\|f_1(y_{11},\cdot)\|_{L^{p_1}}   \|f_2(y_{21},\cdot)\|_{L^{p_2}}   dy_{11} dy_{21}}
   {(  |x_1-y_{11}|  + | x-y_{21}|  )^{r_1/p'_1 + m/p'_2 + m/q}}.
\end{align*}
Denote   $y_{21} = (y_{211}, y_{212})$,
where $y_{211}\in\bbR^{r_1}$ and $y_{212}\in\bbR^{m-r_1}$.
By Young's inequality, we get
\begin{align*}
&
\left\|\int_{\bbR^{n_1+ n_2}}
   \frac{f_1(y_1)   f(y_2)   dy_1 dy_2  }
   {(|x_1-y_{11}| + |y_{12}| + | x-y_{21}| + |y_{22}|)^{\lambda}}
   \right\|_{L_{x_2}^q}
     \\
&\lesssim
 \int_{\bbR^{2r_1 }}
   \frac{\|f_1(y_{11},\cdot)\|_{L^{p_1}}   \|f(y_{211},\cdot)\|_{L^{p_2}}   dy_{11} dy_{211}}
   {(  |x_1-y_{11}| + |x_1 - y_{211}|  )^{r_1/p'_1 +r_1/p'_2+ r_1/q}}.
\end{align*}
 Now the conclusion follows from Proposition~\ref{prop:P1}.

\subsection{The case $0<r_1, r_2<m$}

As in the necessity part, we only need to show that
\begin{align*}
& \left\|
  \int_{\bbR^{n_1+n_2}}
    \frac{f_1(y_1) f_2(y_2)dy_1 dy_2 }
    {K(x,y_1,y_2)^{\lambda}}
    \right\|_{L_x^q}
   \lesssim \|f_1\|_{L^{p_1}} \|f_2\|_{L^{p_2}},
\end{align*}
where
\[
K(x,y_1,y_2) =  |x_1 - y_{11}| + |x_2  -  y_{12}| + |y_{13}|
     +  |x_2 - y_{21}| +  |x_3  -  y_{22}|
      + |y_{23}| ,
\]
$x=(x_1,x_2,x_3)$, $y_i=(y_{i1}, y_{i2}, y_{i3})$,
$x_1, y_{11}\in\bbR^{m-r_2}$,
$x_2, y_{12},y_{21} \in \bbR^{r_1+r_2-m}$,
$x_3, y_{22}\in \bbR^{m-r_1}$,
$y_{13}\in\bbR^{n_1-r_1}$
and $y_{23}\in\bbR^{n_2-r_2}$.

There are five subcases.

(D1) $p_1=1$ and $r_2<n_2$.

In this case, $p_2\le q<\infty$.
Observe that
\begin{align*}
K(x,y_1,y_2)
&\ge   |x_1- y_{11}| +
     |x_2-y_{21}| + |x_3 - y_{22}| + |y_{23}|.
\end{align*}
We have
\begin{align*}
& \left\|
  \int_{\bbR^{n_2}}
    \frac{  f_2(y_2)  dy_2 }
    {K(x,y_1,y_2)^{\lambda}}
    \right\|_{L_{(x_2,x_3)}^q} \\
&\le  \left\|
  \int_{\bbR^{n_2}}
    \frac{  f_2(y_2)  dy_2 }
    {(|x_1 - y_{11}|        +  |x_2 - y_{21}| +  |x_3  -  y_{22}|
      + |y_{23}| )^{\lambda}}
    \right\|_{L_{(x_2,x_3)}^q} \\
&\lesssim
  \int_{\bbR^{n_2-r_2}}
         \frac{ \|f_2(\cdot, y_{23})\|_{L^{p_2}}  dy_{23} }
    {(|x_1 - y_{11}|
      + |y_{23}| )^{(n_2-r_2)/p'_2 + (m-r_2)/q}},
\end{align*}
where we use Young's inequality in the last step.
Hence
\begin{align*}
 \left\|
  \int_{\bbR^{n_2}}
 \!   \frac{  f_2(y_2)  dy_2 }
    {K(x,y_1,y_2)^{\lambda}}
    \right\|_{L_x^q}
&\lesssim
  \left\|  \int_{\bbR^{n_2-r_2}}
         \frac{ \|f_2(\cdot, y_{23})\|_{L^{p_2}}  dy_{23} }
    {(|x_1 - y_{11}|
     \! +\! |y_{23}| )^{(n_2-r_2)/p'_2 + (m-r_2)/q}}
      \right\|_{L_{x_1}^q}.
\end{align*}
By Lemma~\ref{Lm:L1},
\[
 \left\|
  \int_{\bbR^{n_2}}
 \!   \frac{  f_2(y_2)  dy_2 }
    {K(x,y_1,y_2)^{\lambda}}
    \right\|_{L_x^q}
    \lesssim
   \|f_2\|_{L^{p_2}},\qquad \forall y_1\in\bbR^{n_1}.
\]
It follows from Minkowski's inequality that
\begin{align}
 \left\|
  \int_{\bbR^{n_1+n_2}}
    \frac{ f_1(y_1) f_2(y_2)  dy_1 dy_2 }
    {K(x,y_1,y_2)^{\lambda}}
    \right\|_{L_x^q}
\lesssim \|f_1\|_{L^{p_1}} \|f_2\|_{L^{p_2}}. \label{eq:e26}
\end{align}

(D2) $p_1=1$, $r_2=n_2$ and $r_1+r_2>m$.

In this case, $p_2 \le q <\infty$ and
\begin{align*}
K(x,y_1,y_2)
&= |x_1- y_{11}| + |x_2 - y_{12}| + |y_{13}|
   + |x_2-y_{21}| + |x_3 - y_{22}| \\
&\ge   |x_1- y_{11}| + |x_2 - y_{12}|
   + |x_2-y_{21}| + |x_3 - y_{22}|\\
&\approx    |x_1- y_{11}| + |x_2 - y_{12}|
  + |y_{12} - y_{21}| + |x_3-y_{22}| .
\end{align*}
Hence
\begin{align*}
& \left\|
  \int_{\bbR^{n_2}}
    \frac{  f_2(y_2)  dy_2 }
    {K(x,y_1,y_2)^{\lambda}}
    \right\|_{L_{x_3}^q} \\
&\le  \left\|
  \int_{\bbR^{n_2}}
    \frac{  f_2(y_2)  dy_2 }
    {(|x_1- y_{11}| + |x_2 - y_{12}|
   + |x_3-y_{22}| + |y_{12} - y_{21}| )^{\lambda}}
    \right\|_{L_{x_3}^q} \\
&\lesssim
  \int_{\bbR^{n_2+r_1-m}}
         \frac{ \|f_2(y_{21},\cdot)\|_{L^{p_2}}  dy_{21} }
    {(|x_1- y_{11}| + |x_2 - y_{12}|
      + |y_{12} - y_{21}| )^{(n_2+r_1-m)/p'_2 +  r_1 /q}}\\
&=
  \int_{\bbR^{n_2+r_1- m}}
         \frac{ \|f_2( y_{21}+y_{12}, \cdot )\|_{L^{p_2}}  dy_{21} }
    {(|x_1- y_{11}| + |x_2 - y_{12}|
      + | y_{21}| )^{(n_2+r_1-m)/p'_2 +  r_1 /q}}.
\end{align*}
It follows from Lemma~\ref{Lm:L1} that
\[
\left\|
  \int_{\bbR^{n_2}}
    \frac{  f_2(y_2)  dy_2 }
    {K(x,y_1,y_2)^{\lambda}}
    \right\|_{L_x^q}
    \lesssim
    \|f_2\|_{L^{p_2}},\quad \forall y_1\in\bbR^{n_1}.
\]
Thus (\ref{eq:e26}) is true.

(D3) $p_1=1$, $r_2=n_2$ and  $r_1+r_2=m$.

In this case, $p_2<q<\infty$ and
\begin{align*}
K(x,y_1,y_2)
&= |x_1- y_{11}| +  |y_{13}|
   + |x_3-y_2|
 \ge  |x_1- y_{11}|    + |x_3-y_2|  .
\end{align*}
Hence
\[
  \int_{\bbR^{n_2}}
    \frac{  |f_2(y_2)|  dy_2 }
    {K(x,y_1,y_2)^{\lambda}}
  \le  \int_{\bbR^{n_2}}
    \frac{  |f_2(y_2)|  dy_2 }
    {(|x_1- y_{11}|    + |x_3-y_2| )^{\lambda}}.
\]
It follows from   Minkowski's inequality that
\begin{align*}
 \left\|
  \int_{\bbR^{n_2}}
    \frac{  f_2(y_2)  dy_2 }
    {K(x,y_1,y_2)^{\lambda}}
    \right\|_{L_{x_1}^q}
&\le  \int_{\bbR^{n_2}}\frac{  |f_2(y_2)|  dy_2 }
    {  |x_3-y_2|^{r_2/p'_2 + r_2/q}}.
\end{align*}
By Theorem~\ref{thm:linear}, we get
\[
 \left\|
   \int_{\bbR^{n_2}}\frac{  |f_2(y_2)|  dy_2 }
    {  |x_3-y_2|^{r_2/p'_2 + r_2/q}}
    \right\|_{L_{x_3}^q}
    \lesssim
    \|f_2\|_{L^{p_2}}.
\]
Hence
\[
 \left\|
  \int_{\bbR^{n_2}}
    \frac{  f_2(y_2)  dy_2 }
    {K(x,y_1,y_2)^{\lambda}}
    \right\|_{L_{x}^q}
=\left\| \left\|
  \int_{\bbR^{n_2}}
    \frac{  f_2(y_2)  dy_2 }
    {K(x,y_1,y_2)^{\lambda}}
    \right\|_{L_{x_1}^q}
  \right\|_{L_{x_3}^q}
  \lesssim \|f_2\|_{L^{p_2}},
  \quad \forall y_1\in\bbR^{n_1}.
\]
Therefore, (\ref{eq:e26}) is true.

(D4)\, $p_2=1$.

Similarly to Cases (D1) - (D3) we get the conclusion.

(D5)\, $1<p_1, p_2<\infty$ and $\max\{p_1, p_2\}\le q<\infty$

There are two subcases.

(D5)(a)\,\, $r_1+r_2>m$.

Recall that
\[
  K(x,y_1,y_2)
 = |x_1\!-\! y_{11}| + |x_2 \!-\! y_{12}| + |y_{13}|
   + |x_2 \!-\! y_{21}| + |x_3 \!-\! y_{22}| + |y_{23}| .
\]
Using H\"older's inequality when computing
the integrations $d y_{13}$ and $dy_{23}$, we get
\begin{align}
& \left|
  \int_{\bbR^{n_1+n_2}}
    \frac{ f_1(y_1) f_2(y_2) dy_1 dy_2 }
    {K(x,y_1,y_2)^{\lambda}}
    \right| \nonumber \\
&\lesssim
  \int_{\bbR^{n_1+n_2}}
    \frac{ \|f_1(y_{11},y_{12},\cdot)\|_{L^{p_1}}
      \| f_2(y_{21}, y_{22}, \cdot)\|_{L^{p_2}} dy_{11} dy_{12} dy_{21} dy_{22} }
    {(|x_1\!-\! y_{11}| + |x_2 \!-\! y_{12}|
  + |x_2 \!-\! y_{21}| + |x_3 \!-\! y_{22}| )^{r_1/p'_1 + r_2/p'_2+m/q}}.
   \label{eq:e30}
\end{align}
We see from Young's inequality
that
\begin{align*}
 &  \left\|
  \int_{\bbR^{n_1+n_2}}
    \frac{ f_1(y_1) f_2(y_2) dy_1 dy_2 }
    {K(x,y_1,y_2)^{\lambda}}
    \right\|_{L_{(x_1,x_3)}^q}\\
  &\lesssim
  \int_{\bbR^{2(r_1+r_2-m)}}
  \frac{\|f_1(\cdot,y_{12},\cdot)\|_{L^{p_1}}
  \|f_2(y_{21},\cdot)\|_{L^{p_2}}dy_{12} dy_{21}}
  {(|x_2-y_{12}| + |x_2 - y_{21}|)^{(r_1+r_2-m)(1/p'_1+1/p'_2+1/q)}}.
\end{align*}
By Proposition~\ref{prop:P1}, we get the conclusion as desired.

(D5)(b)\,\, $r_1+r_2=m$.

In this case, the variables $x_2, y_{12}, y_{21}$ do not exist.
If $p_1<p_2$,
 we see from  H\"older's and Young's inequalities that
\begin{align*}
   \left\|
  \int_{\bbR^{n_1+n_2}}
    \frac{ f_1(y_1) f_2(y_2) dy_1 dy_2 }
    {K(x,y_1,y_2)^{\lambda}}
    \right\|_{L_{x_3}^q}
&\lesssim
  \int_{\bbR^{n_1}}
     \frac{|f_1(y_1)| \cdot \|f_2\|_{L^{p_2}}dy_1}
     {(|x_1-y_{11}| + |y_{13}|)^{n_1/p'_1 + r_1/q}}.
\end{align*}
By Theorem~\ref{thm:linear}, we get
\[
     \left\|
  \int_{\bbR^{n_1+n_2}}
    \frac{ f_1(y_1) f_2(y_2) dy_1 dy_2 }
    {K(x,y_1,y_2)^{\lambda}}
    \right\|_{L_x^q}
  \lesssim \|f_1\|_{L^{p_1}}\|f_2\|_{L^{p_2}} .
\]

If $p_2<p_1$ or $p_1=p_2<q$, with similar arguments we get the conclusion.

If $q=p_1=p_2$, then we have $q\le 2$, $n_1>r_1$ and $n_2>r_2$.
By Young's inequality, we get
\begin{align*}
&  \left\|
  \int_{\bbR^{n_1+n_2}}
    \frac{ f_1(y_1) f_2(y_2) dy_1 dy_2 }
    {K(x,y_1,y_2)^{\lambda}}
    \right\|_{L_x^q}\\
&\lesssim
  \int_{\bbR^{n_1+n_2-r_1-r_2}}
     \frac{\|f_1(\cdot,y_{13})\|_{L^{p_1}}
     \|f_2(\cdot, y_{23})\|_{L^{p_2}}dy_{13} dy_{23}}
     {( |y_{13}| + |y_{23}|)^{(n_1-r_1)/p'_1 + (n_2-r_2)/p'_2}}.
\end{align*}
Now the conclusion follows from Lemma~\ref{Lm:L1}.
This completes the proof.


\end{document}